\newtheorem{theorem}{Theorem}[section]
\newtheorem{corollary}[theorem]{Corollary}
\newtheorem{lemma}[theorem]{Lemma}
\newtheorem{proposition}[theorem]{Proposition}
\theoremstyle{definition}
\newtheorem{problem}[theorem]{Problem}
\newtheorem{assumption}[theorem]{Assumption}
\newtheorem{comment}[theorem]{Comment}
\theoremstyle{remark}
\newtheorem{remark}[theorem]{Remark}
\numberwithin{equation}{section}
\newcommand{\nref}[1]{(\ref{#1})}
\newcommand{\Th}{\mathcal{T}_h}
\newcommand{\Thk}{\mathcal{T}_h^\ell}
\newcommand{\Eh}{\mathcal{E}_h}
\newcommand{\EH}{\mathcal{E}_H}
\newcommand{\EK}{\mathcal{E}^K}
\newcommand{\Yl}{\mathcal{Y}^\ell}
\newcommand{\Xl}{\mathcal{X}^\ell}
\newcommand{\cX}{\mathcal{X}}
\newcommand{\cY}{\mathcal{Y}}
\newcommand{\Ni}{\mathcal{N}_i}
\newcommand{\XE}{\mathcal{X}_E}
\newcommand{\YE}{\mathcal{Y}_E}
\newcommand{\El}{\mathcal{E}_\ell}
\newcommand{\hW}{\widehat W_h}
\newcommand{\hR}{\widehat R}
\newcommand{\tW}{\widetilde W_h}
\newcommand{\tR}{\widetilde R}
\newcommand{\hSop}{\mathcal{\widehat S}}
\newcommand{\tSop}{\mathcal{\widetilde S}}
\newcommand{\tSmat}{{\widetilde S}}
\newcommand{\Rop}{\mathcal{R}}
\newcommand{\Bop}{\mathcal{B}}
\newcommand{\Lifthl}{\mathcal{L}_h^\ell}
\newcommand{\Lifth}{\mathcal{L}_h}
\renewcommand{\vec}[1]{\mathbf{#1}}
\newcommand{\B}{\mathcal{B}}
\newcommand{\Svem}{S_a^K}
\newcommand{\ED}{\mathcal E_D}
\newcommand{\BDDC}{\mathcal{M}_{BDDC}}
\newcommand{\FETI}{\mathcal{M}_{FETI}}
\newcommand{\IdtW}{\mathbf{1}_{\tW}}
\newcommand{\oml}{\Omega^\ell}
\newcommand{\omk}{\Omega^m}
\newcommand{\normWh}[1]{\| #1 \|_{1/2,*}}
\newcommand{\snormWh}[1]{| #1 |_{1/2,*}}
\newcommand{\Pinabla}{\Pi^\nabla_K}
\newcommand{\tN}{\widetilde{N}}
\newcommand{\hN}{\widehat{N}}
\newcommand{\Nl}{{N_\ell}}
\newcommand{\ND}{{N_\Delta}}
\newcommand{\NP}{{N_\Pi}}
\newcommand{\DD}{{\Delta\Delta}}
\newcommand{\PP}{{\Pi\Pi}}
\newcommand{\DP}{{\Delta\Pi}}
\renewcommand{\Re}{\mathbb{R}}
\newcommand{\Ndofs}{N_{dof}}
\newcommand{\BD}{{B_D}}
\newcommand{\DB}{D^\flat}
\newcommand{\tS}{\widetilde S}
\newcommand{\hS}{\widehat S}
\newcommand{\Pscottzhang}{\Pi_{SZ}}
\begin{document}
	
	\title[BDDC and FETI-DP for VEM]{BDDC and FETI-DP for the Virtual Element Method}%

	\author[S. Bertoluzza]{Silvia Bertoluzza}
	\address{IMATI ``E. Magenes'', CNR, Pavia (Italy)}%
	\email{silvia.bertoluzza@imati.cnr.it}%
	
	\author[M. Pennacchio]{Micol Pennacchio}
	\address{IMATI ``E. Magenes'', CNR, Pavia (Italy)}%
	\email{micol.pennacchio@imati.cnr.it}%
	
	\author[D. Prada]{Daniele Prada}
	\address{IMATI ``E. Magenes'', CNR, Pavia (Italy)}%
	\email{daniele.prada@imati.cnr.it}%

	\thanks{{This paper has been realized in the framework of ERC Project CHANGE, which has received funding from the European Research Council (ERC) under the European Union’s Horizon 2020 research and innovation programme (grant agreement No 694515).
	\newline
This is a post-peer-review, pre-copyedit version of an article published in Calcolo,  54:1565-1593 (2017). The final authenticated version is avaliable online at: https://doi.org/10.1007/s10092-017-0242-3.
  }}%
	\subjclass{}%
	\keywords{}%
	
		\maketitle
		
	\begin{abstract} 
		We build and analyze Balancing Domain Decomposition by Constraint (BDDC) and Finite Element Tearing and Interconnecting Dual Primal (FETI-DP)  preconditioners  for  elliptic problems discretized by the virtual element method (VEM).  
		We prove polylogarithmic condition number bounds, independent of the number of subdomains, the mesh size, and jumps in the diffusion coefficients. 
		Numerical experiments confirm the theory. 
	\end{abstract}

	
	\section{Introduction}
	\label{intro}
	
	The ever-increasing interest in  methods based on polygonal and polyhedral meshes for the numerical solution of PDEs  stems  from 
	the high flexibility that polytopic grids allow in the treatment  of complex geometries, which presently turns out to be, in many applications in computational engineering and scientific computing, as crucial a task as the construction and numerical solution of the discretized equations. 
	Examples of methods  where the discretization is based on arbitrarily shaped polytopic meshes include: 
	Mimetic Finite Differences \cite{book_mimetic,brezzi_mimetic}, Discontinuous Galerkin-Finite Element Method (DG-FEM) \cite{Rev_DG,Cangiani_hpDGVEM}, Hybridizable and Hybrid High-Order Methods
	\cite{Cockburn_LDG,Ern_LDG}, Weak Galerkin Method  \cite{Weak_FEM}, BEM-based FEM \cite{BEM_FEM}  and Polygonal FEM \cite{pol_FEM}. 
	Among such methods, the virtual element method (VEM) \cite{basicVEM} is a quite recent  discretization framework which can be viewed as an extension of the Finite Element Methods (FEM). 
	The  main idea of VEM is to consider local approximation spaces including polynomial functions, but to avoid the explicit construction and integration of the associated shape functions, whence the name {\em virtual}.
	An implicit knowledge of the local shape functions allows the evaluation of the operators and matrices needed in the method. 
	Its implementation is described in \cite{hitchVEM} and the $p$ and $hp$ versions of the method are discussed and analyzed in \cite{antonietti_p_VEM,beirao_hp,beirao_hp_exponential}.
	Despite its recent introduction, VEM has already been applied and extended to study a wide variety of different model problems. 
	Within the  VEM literature we recall applications to: parabolic problems \cite{beirao_parab}, Cahn-Hilliard, Stokes, Navier-Stokes and Helmoltz equations \cite{Antonietti_VEM_Stokes,Antonietti_VEM_Cahn,beirao_stokes,beirao_Navier_Stokes,perugia_Helmholtz}, linear and nonlinear elasticity problems \cite{beirao_elastic,beirao_linear_elasticity,VEM_3D_elasticity}, general elliptic problems in mixed form \cite{VEM_mixed}, fracture networks \cite{VEM_discrete_fracture}, Laplace-Beltrami equation \cite{VEM_Laplace_Beltrami}.

	To ensure that the Virtual Element Method can achieve its full potential, it is however necessary to deal with the efficient solution of the associated linear system of equations, and, in particular, to provide good preconditioners. By proposing suitable choices for the degrees of freedom, the first works in this direction aimed at dealing with the increase of condition number of the stiffness matrix resulting from the degradation of the quality of the geometry \cite{beirao_hp,antonietti_p_VEM,Mascotto_illcond} and/or to the increase in the polynomial order  of the method \cite{berrone_borio_17}.
	In this paper we rather focus on the increase in the condition number resulting from decreasing the mesh-size. In view of a possible parallel implementation of the method, we choose  to consider a Domain Decomposition approach \cite{Toselli.Widlund}.
	In particular, we focus on the two preconditioning techniques which are, nowadays, considered as the most efficient: 
	Balancing Domain Decomposition by Constraint (BDDC) \cite{Dohrmann.03} and Dual-Primal Finite Element Tearing and Interconnecting (FETI-DP) \cite{Farhat:partI}. These are non overlapping domain decomposition methods, aiming at preconditioning the Schur complement with respect to the unknowns on the skeleton of the subdomain partition.
	Acting on dual problems -- the BDDC method defines a preconditioner for the Schur complement of the linear matrix stemming from discretized PDE, whereas FETI-DP  reformulates the problem as a constrained optimization problem  and solves it by iterating on the set of Lagrange multipliers representing the fluxes across the interface between the non overlapping subdomains -- the two methods  have been proven to be spectrally equivalent \cite{brennersung,MandelSousedik,Mandel_algebraic}.
	While both the BDDC and FETI-DP methods have been already extensively studied in the context of many different discretization methods --  spectral elements  \cite{Pavarino2007.BDDC.FETIDP,klawonn2008spectral}, mortar discretizations \cite{KIM.mortar.BDDC,KIM.Tu.mortar.BDDC,KIM.Dryja.Widlund.08,KIM.mortar.FETIDP}, Discontinuous Galerkin methods \cite{dryja2007bddc,canuto2014bddc}, NURBS discretizations in isogeometric analysis \cite{beirao_BDDC_iso,beirao_BDDC_iso_deluxe} --
	to the best of our knowledge, these preconditioners have not yet been considered for VEM methods.

	Here we prove that properties of scalability, quasi-optimality and independence on the discontinuities of the elliptic operator coefficients across subdomain interfaces still hold when dealing with VEM. 
	More specifically,
	for both preconditioners, we show that the condition number of the preconditioned matrix is bounded by a constant times
	$$
	(1+\log(Hk^2/h))^2,
	$$
	where $H$, $h$ and $k$ are the subdomain mesh-size, the fine mesh-size and the polynomial order respectively, see Corollary \ref{cond_bound}. In order to prove such a result we show that the validity of several inequalities (see, e.g. Lemma \ref{lem:technical} 
	in the following) is independent of the properties of the discretization within the subdomains and only relies on the properties of the trace of the discretization on the interface. This allows us to avoid handling discrete functions which are only  implicitly defined.

	The  
	paper is organized as follows. The basic notation,
	functional setting and the  description of the Virtual Element Method are given
	in  Section~\ref{sec:VEM}. 
	The dual-primal preconditioners are introduced and analyzed in Section~\ref{sec:dualprimal}, whereas their 
	algebraic forms  are  presented 
	in Section~\ref{algebraic}.
	Numerical experiments that validate the theory are presented in Section~\ref{sec:expes}.

	\section{The virtual element method (VEM)}\label{sec:VEM}
	
	For $\hat \Omega$ (resp. $\hat \Gamma$) denoting any two-dimensional (resp. one-dimensional) domain with diameter (resp. length) $\hat H$, we introduce the usual (properly scaled) norms and semi-norms for the functional spaces that we will need to use in the following: 
	\begin{gather*}
		\| w \|_{L^2(\hat{\Omega})}^2 = \hat H^{-2} \int_{\hat\Omega} | w(x) |^2\,dx, \qquad \| w \|_{L^2(\hat{\Gamma})}^2 =\hat H^{-1} \int_{\hat\Gamma} | w(\tau) |^2\,d\tau,\\
		| w |_{H^1(\hat{\Omega})}^2 = \int_{\hat\Omega} | \nabla w(x) |^2\,dx, \qquad | w(\tau) |_{H^1(\hat{\Gamma})}^2 = \hat H \int_{\Gamma} | w'(\tau) |^2\,d\tau,\\
		| w |^2_{H^s(\hat{\Gamma})} = \hat H^{2s-1} \int_{\hat\Gamma}\,d\sigma \int_{\hat{\Gamma}}\,d\tau \frac {|w(\sigma) - w(\tau)|^2}{|\sigma-\tau|^{2s+1}  }, \ 0<s< 1,\\
		\| w \|_{H^1(\hat{\Omega})}^2 = \| w \|_{L^2(\hat{\Omega})}^2 + | w |_{H^1(\hat{\Omega})}^2, \quad \| w \|_{H^s(\hat{\Gamma})}^2 = \| w \|_{L^2(\hat{\Gamma})}^2 + | w |_{H^s(\hat{\Gamma})}^2,\ 0<s\leq 1.
	\end{gather*}
	
	\
	
	Let us start by recalling the definition and the main properties of the  Virtual Element Method \cite{basicVEM}. To fix the ideas we focus on the following elliptic model  problem: 
	\[
	-\nabla \cdot(\rho \nabla u) = f \ \text{ in } \Omega, \qquad u = 0 \ \text{ on } \partial \Omega,
	\]
	with $f \in L^2(\Omega)$,
	where $\Omega \subset \mathbb{R}^2$ is a polygonal domain. We assume that the coefficient $\rho$ is a scalar such that for almost all $x \in \Omega$,  $\alpha \leq \rho(x) \leq M$ for two constants $M \geq \alpha >0$. The variational formulation of such an equation reads
	\begin{equation}\label{variational}
		\left\{
		\begin{array}{l}
			\text{find $u \in V:=H^1_0(\Omega)$ such that}\\[1mm]
			a(u,v) = ( f , v ) \ \forall v \in V 
		\end{array}\right.
	\end{equation}
	with
	\[
	a(u,v) = \int_{\Omega}\rho(x)\nabla u(x)\cdot \nabla v(x)\,dx, \qquad (f,v) = \int_{\Omega}f(x) v(x)\,dx.
	\]
	
	\
	
	We consider a family $\{ \Th \}_h$ of tessellations of $\Omega$ into a finite number of  simple polygons $K$, and we let $\Eh$ be the set of edges $e$ of $\Th$. We make the following assumptions on the tessellation
	\begin{assumption}
		There exists constants $\gamma_0,\gamma_1 > 0$ such that for all tessellations $\Th$:
		\begin{enumerate}[(i)]
			\item  each element $K \in \Th$ is star-shaped with respect to a ball of radius $\geq \gamma_0 h_K$, where $h_K$ is the diameter of $K$;
			\item  for each element $K$  in $\Th$ the distance between any two vertices of $K$ is $\geq \gamma_1 h_K$.
		\end{enumerate}
	\end{assumption}
	
	\
	
	For simplicity, we also make the following assumption
	\begin{assumption}\label{ass:uniform} The tessellation $\Th$ is quasi-uniform, that  is there exist positive constants $c_0$, $c_1$  such that for any two elements $K$ and $K'$ in $\mathcal{T}_h$ we have $\alpha_0  \leq h_K / h_{K'}\leq \alpha_1 $. 
	\end{assumption}
	
	\begin{assumption}\label{ass:roconst} For all $K$ there exists a constant $\rho_K$ such that the coefficient $\rho$ verifies $\rho = \rho_K$ on $K$.
	\end{assumption}
	
	\begin{remark}
		Assumption \ref{ass:roconst} ensures that for the computable projection $\Pi^\nabla_K$ the splitting \nref{1} holds. Remark, however, that the virtual element method can also be defined for the case of coefficients varying within the elements of the tessellation \cite{beirao_var_coef}, and that the theoretical analysis developed here, which essentially relies on the bound \nref{elemequiv}, can be extended to such a case (see Section \ref{sec:extension}).
	\end{remark}

	\
	
	The Virtual Element discretization space is first defined element by element starting from the edges of the tessellation. More precisely, for each  polygon $K \in \Th$ we define the space $\mathbb{B}_k(\partial K)$ as
	\begin{gather*}
		\mathbb{B}_k(\partial K) = \{ v \in C^0(\partial K): v|_{e} \in \mathbb{P}_k\ \forall e \in \EK \},
	\end{gather*}
	where $\mathbb{P}_k$ denotes the set of polynomials of degree less than or equal to $k$, and where $\EK$ denotes the set of edges of the polygon $K$. It is not difficult to see that $\mathbb{B}_k(\partial K)$ is a linear space of dimension $kn^K$ where $n^K$ is the number of vertices of the polygon $K$.
	Letting \begin{equation}\label{deflocalspace}
		V^{K,k} =   \{ v \in H^1(K):\ v|_{\partial K} \in \mathbb{B}_k(\partial K),\ \Delta v \in \mathbb{P}_{k-2}(K) \}
	\end{equation}
	(with $\mathbb{P}_{-1}=\{0\}$), the discrete space $V_h$ is defined as
	\begin{align*}
		V_h &= \{ v \in V: w|_{K} \in V^{K,k}\ \forall K \in \Th \} =\\
		& = \{ v \in V:\forall K \in \Th\  w|_{\partial K} \in \mathbb{B}_k(\partial K),\ \Delta v|_K \in  \mathbb{P}_{k-2}(K) \}.
	\end{align*}
	
	\
	
	A function $v_h \in V_h$ is uniquely determined by the following degrees of freedom 
	\begin{itemize}
		\item the values of $v_h$ at the vertices of the tessellation;
		\item for each edge $e$, the values of $v_h$ at the $k-1$ internal points of the $k+1$-points Gauss-Lobatto quadrature rule on $e$;
		\item for each element $K$, the moments up to order $k-2$ of $v_h$ in $K$. 
	\end{itemize} 
	The degrees of freedom are then naturally split as {\em boundary degrees of freedom} (the first two sets) and {\em interior degrees of freedom}. Boundary degrees of freedom are nodal, and we let $\Upsilon$ denote the corresponding set of nodes, which includes the vertices of the tessellation  as well as the union over all edges of the $k-1$ internal nodes of the $k+1$-points  Gauss-Lobatto quadrature rule.

	Using a Galerkin approach, the Virtual Element Method stems from replacing the bilinear form $a$ (which, due to the implicit definition of the discrete space, is not directly computable on discrete functions in terms of the degrees of freedom) with a suitable approximate bilinear form. More precisely, setting, 
	for each $K \in \Th$
	\[
	a^K(u,v)=\int_K \rho \nabla u\cdot \nabla v,
	\]
	the key observation is that, given any $v \in V^{K,k}$ and any $p \in \mathbb{P}_k(K)$ we can compute $a^K(p,v)$ by using Green's formula (recall that $\rho=\rho_K$ constant in $K$)
	\[
	a^K(p,v) = - \rho_K\int_K v \Delta p + \rho_K \int_{\partial K} v \frac {\partial p}{\partial n}.
	\] 
	Since $v$ is a piecewise polynomial on $\partial K$,  $p$  a polynomial of order $k$, and $\Delta p$ a polynomial of order $k-2$, the right hand side can be computed directly from the degrees of freedom of $v_h$. This allows to define the ``element by element'' computable projection
	operator $\Pinabla : V^{K,k} \longrightarrow \mathbb{P}_k(K)$  as
	\[
	a^K(\Pinabla  u, q ) = a^K(u,q) \quad \forall q \in \mathbb{P}_k(K).
	\]
	The last equation defines $\Pinabla u$ only up to a constant; this is fixed by prescribing
	\begin{align*}
		&\sum_{i=1}^n \Pinabla u(V_i) =  \sum_{i=1}^n u(V_i),\quad V_i \text{ vertices of }K, & &\text{for }k = 1,\\
		&\int_K\Pinabla u = \int_K u, & &\text{for }k \geq 2.
	\end{align*}

	Clearly
	we have
	\begin{equation}\label{1}
		a^{K}(u,v) = a^K(\Pinabla  u,\Pinabla  v) + a^K(u - \Pinabla  u, v - \Pinabla  v).
	\end{equation}
	
	\

	The virtual element method stems 
	from replacing the second term of the sum on the right hand side (which cannot be computed exactly), with an ``equivalent''  term, where the bilinear for $a^K$ is substituted by a computable symmetric bilinear $\Svem$, resulting in defining
	\[
	a^{K}_h(u,v) = a^K(\Pinabla  u,\Pinabla  v) + \Svem(u - \Pinabla  u, v - \Pinabla  v) .
	\]
	Different choices are possible for the bilinear form $\Svem$ (see \cite{beirao_stab}), the essential requirement being that it satisfies
	\begin{equation}\label{2}
		c_0  a^K(v,v) \leq  \Svem(v,v) \leq c_1 a^K(v,v),\quad \forall v \in V^{K,k}\ \text{ with } \Pinabla  v=0,
	\end{equation}
	for two positive constants $c_0$ and $c_1$, so that we have 
	\begin{equation}\label{elemequiv}
		(1+c_0)   a^K(v,v) \leq   a^K_h(v,v) \leq (1+c_1) a^K(v,v) ,\quad \forall v \in V^{K,k}.
	\end{equation}
	In the numerical tests performed in Section \ref{sec:expes} we made the standard choice of defining $\Svem$ in terms of the vectors of local degrees of freedom as the properly scaled euclidean scalar product. 
	
	\
	
	Finally, we let $a_h : V_h \times V_h \to \mathbb{R}$ be defined by
	\[
	a_h(u_h,v_h) = \sum_K a_h^K(u_h,v_h),
	\]
	and we consider  the following discrete problems of the form
	\begin{problem}\label{discrete_full} Find $u_h \in V_h$ such that
		\[  a_h(u_h,v_h) = f_h(v_h) \qquad  \forall v_h \in V_h. \]
	\end{problem}
	
	\
	
	For the study of the convergence, stability and robustness properties of the method we refer to~\cite{basicVEM,beirao_var_coef}.

	\section{Domain decomposition for the Virtual Element Method}\label{sec:dualprimal}
	We assume that the set $\Th$ can be split as $\Th = \cup_\ell \Thk$, in such a way that $\Omega$ may be written as the union of $L$ disjoint polygonal subdomains $\oml$ with
	\begin{equation}\label{patch}
		{\Omega^\ell}= \cup_{K \in \Thk} K.
	\end{equation}
	\begin{assumption}\label{ass:macroel} We make the following assumptions:
		\begin{enumerate}[(i)] 
			\item the decomposition is geometrically conforming, that is, for each $\ell, m$,
			$\partial\Omega^\ell \cap \partial\Omega^m$ is either a vertex or a whole edge of both $\Omega^\ell$ and $\Omega^m$;
			
			\item there exists a positive constant $\gamma_2$ such that for each $\ell$ the distance between any two vertices of $\Omega^\ell$ is $\geq \gamma_2 \ \text{diam}(\Omega^\ell)$;

		\end{enumerate}
	\end{assumption}
	
	\begin{assumption}
		For all $\ell$, there exists a scalar $\rho_\ell > 0$ such that
		$\rho|_{\Omega^\ell} \simeq \rho_\ell$.
	\end{assumption}

	We let $\Gamma = \cup \partial\Omega^\ell \setminus \partial\Omega$ denote the skeleton of the decomposition. 
	For simplicity we assume that the decomposition is quasi uniform:
	
	\begin{assumption}
		There exists constants $\beta_0$ and $\beta_1$  such that for any two subdomains $\Omega^\ell$ and $\Omega^m$ we have 
		$\beta_0 \leq \text{diam}(\Omega^\ell)/\text{diam}(\Omega^m) \leq \beta_1$.
	\end{assumption}

	\begin{remark}
		We would like to point out that Assumption \ref{ass:macroel} is actually also an assumption on the tessellation $\Th$, satisfied, for instance, if $\Th$ is built by first introducing the $\Omega^ \ell$s and then refining them. The design and testing of FETI-DP and BDDC type preconditioners on more general tessellation will be the object of a future study. 
	\end{remark}

	We are interested here in explicitly studying the dependence of the estimates that we are going to
	prove on the number and size of the subdomains, the number and size of the elements of the tessellations, and on order $k$ of the virtual element method.
	To this end, in the following we will employ the notation
	$A \lesssim B$ (resp. $A \gtrsim B$) to say that the quantity $A$ is bounded from above (resp. from below)
	by $cB$, with a constant $c$ independent of $\rho$ and depending on the tessellation and on the decomposition only via the constants in Assumptions 2.1-3 and 3.1-3.
	The expression $A \simeq B$ will stand for $ A \lesssim B \lesssim A$.

	Observe that, in view of the quasi uniformity assumptions on the tessellation $\Th$ and on its decomposition into subdomains, we can introduce global mesh size parameters $H$ and $h$ such that for all $\ell$ and for all $K$
	\[
	h_K \simeq h, \qquad \text{diam}(\Omega^\ell) \simeq H.
	\]

	\

	In the following it will be useful to define several sets of ``pointers'' for identifying specific sets of nodes and edges.  To this end, recalling that \[\Upsilon = \{ y_i,\ i=1,\cdots,N \}\]
	denotes the set of nodes corresponding to boundary degrees of freedom, 
	we let 
	\[
	\mathcal{Y} = \{ i: \ y_i \in \Gamma\} 
	\]
	identify those nodes lying on the interface $\Gamma$ of the decomposition of $\Omega$ into subdomains. 
	For each subdomain $\oml$ we let $\Yl \subset \mathcal Y$
	\[
	\Yl = \{i \in \mathcal{Y}: y_i \in \partial\oml\}
	\]
	identify the set of nodes on the boundary of $\oml$.
	We let $\mathcal{X}$ and $\Xl$ identify, respectively, the set of cross-points and  the set of vertices of the subdomain $\oml$
	\[\mathcal{X} = \{i\in \mathcal{Y}: y_i \text{ is a cross point} \}, \qquad \Xl = \mathcal{X} \cap \Yl.\] 
	
	\
	
	Moreover, letting $\EH$ denote the set of those macro-edges $E$ of the decomposition which are interior to $\Omega$,  for each $E\in \EH$  we let
	\[ \YE= \{ i \in \mathcal{Y}: \ y_i \in \bar E  \}, \qquad \mathcal{X}_E = \{
	i \in \mathcal{X}: \ y_i \in \bar{E}
	\} \]
	identify respectively  the set of nodes and the set of cross points belonging to $E$.
	For each node $y_i$  we let $\Ni$ identify the set of the $n_i$ subdomains to whose boundary $y_i$ belongs 
	\[ \Ni = \{\ell: y_i \in \partial\oml\},\qquad n_i = \#(\Ni). \]
	Remark that for all $i \in \cY \setminus \cX$ we will have $n_i = 2$.
	
	\
	
	The subdomain spaces $V_h^\ell$ and bilinear forms $a_h^\ell:V_h^\ell \times V_h^\ell \to \mathbb{R}$ are defined, as usual, as
	\[ 
	V_h^\ell = {V_h}|_{\oml}, \qquad a_h^\ell(u_h,v_h) = \sum_{K\in \Thk} a_h^K(u_h,v_h),
	\label{ah}
	\]
	and we easily see that solving Problem \ref{discrete_full} is equivalent to finding $u_h = (u_h^\ell)_\ell \in \prod V_h^\ell$ minimizing
	\[
	J(u_h) = \frac 1 2 \sum_{\ell} a^\ell_h(u_h^\ell,u_h^\ell) - \int_\Omega f u_h
	\]
	subject to a continuity constraint across the interface. In view of (\ref{elemequiv}) we immediately obtain that for all $u, v \in V_h^\ell$
	\[
	a_h^\ell(u,v) \lesssim | u |_{1,\oml}  | v |_{1,\oml}, \qquad a_h^\ell(u,u) \simeq  | u |_{1,\oml}^2.
	\]
	
	\
	
	The FETI-DP and BDDC preconditioners are constructed according to the same strategy used in the finite element case. More precisely we let 
	\[
	\mathring V_h = \prod \mathring V_h^\ell,\quad \text{ with }\quad \mathring V_h^\ell = V_h^\ell \cap H^1_0(\oml),
	\]
	and
	\begin{equation}\label{Wspace}
		W_h = \prod_{\ell}W_h^\ell, \quad \text{ with }\quad W_h^\ell = {V_h^\ell}|_{\partial\oml}.
	\end{equation}
	Moreover, let $\widehat W_h \subset W_h$ denote the subset  of traces of functions which are continuous across $\Gamma$:
	\begin{equation}\label{What}
		\widehat W_h = \{ w_h \in W_h: \forall i \in \mathcal{Y}, \ell,m \in \mathcal{N}_i \Rightarrow w_h^\ell(y_i) = w_h^m(y_i)\}.
	\end{equation}
	Analogously we let $\widetilde W_h$ denote the subset of $W_h$ of traces of functions which are continuous at cross-points:
	\begin{equation}\label{Wtilde}
		\widetilde W_h = \{ w_h \in W_h: \forall i \in \mathcal{X}, \ell,m \in \mathcal{N}_i \Rightarrow w_h^\ell(y_i) = w_h^m(y_i)\} .
	\end{equation}

	On $W_h$ we define a norm and a seminorm:
	\[
	\normWh{w_h}^2 = \sum_{\ell}\| w_h^\ell \|_{H^{1/2}(\partial\oml)}^2, \qquad \snormWh{w_h}^2 = \sum_{\ell}| w_h^\ell |_{H^{1/2}(\partial\oml)}^2.
	\]
	Moreover, for each macroedge $E = \Gamma^\ell \cap \Gamma^m\in \EH$, we let $W^E_h = W^\ell_h|_E = W^m_h|_E$.  The space $W_h^E$ is the order $k$ one dimensional finite element space on the grid induced on $E$ by the tessellation $\mathcal{T}_h$, which, in view of Assumptions \ref{ass:uniform}, is quasi uniform of mesh size $h$. In particular, it satisfies standard inverse and direct inequalities (recall that we are using scaled norms and seminorms): for all $r,s$ with $0 < r \leq k+1$, $0 \leq s < \min\{3/2,r\}$, $w \in H^r(E)$ implies
	\begin{gather}
		\inf_{v_h \in W_h^E} \| w - v_h \|_{H^s(E)} \lesssim \left(\frac{h}{Hk}\right)^{r-s}  | w |_{H^r(E)},
		\label{direct}
	\end{gather}
	and for all $s,r$ with $0 \leq s \leq r < 3/2$, $w_h \in W_h^E$ implies
	\begin{gather}
		\| w_h \|_{H^r(E)} \lesssim \left(\frac h {H k^2}\right)^{s - r} \| w_h \|_{H^s(E)}. \label{inverse}
	\end{gather}
	
	\
	
	As usual, we define a local discrete lifting operator $\Lifthl: W_h^\ell \to V_h^\ell$ as
	\begin{eqnarray}
		a^\ell_h (\Lifthl w_h,v_h)=0, & \forall v_h \in \mathring V_h^\ell, \\
		\Lifthl w_h = w_h & \ \text{ on }\partial\oml.
	\end{eqnarray}
	The following proposition holds
	\begin{proposition}\label{prop:lifting}
		$\Lifthl$ is well defined, and it verifies 
		\[
		| \Lifthl w_h |_{1,\oml} \simeq | w_h |_{1/2,\partial\oml}.
		\]
	\end{proposition}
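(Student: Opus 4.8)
The plan is to establish the two directions of the equivalence separately, using the variational (energy–minimization) characterization of the discrete lifting together with the norm equivalence \eqref{elemequiv} that converts the (only implicitly defined) form $a_h^\ell$ into the computable $H^1$ seminorm. First I would note that well-posedness of $\Lifthl$ is immediate: the problem defining $\Lifthl w_h$ is a square linear system on the finite-dimensional space $\mathring V_h^\ell$, and $a_h^\ell(\cdot,\cdot)$ is an inner product on $\mathring V_h^\ell$ by \eqref{elemequiv} (since $|\cdot|_{1,\oml}$ is a norm on $H^1_0(\oml)$), so the interior correction exists and is unique; equivalently, $\Lifthl w_h$ is the unique minimizer of $a_h^\ell(v,v)$ over all $v\in V_h^\ell$ with $v|_{\partial\oml}=w_h$.

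For the upper bound $|\Lifthl w_h|_{1,\oml}\lesssim |w_h|_{1/2,\partial\oml}$, the key step is to exhibit a competitor extension. I would take $E w_h\in V_h^\ell$ to be any discrete VEM function with boundary trace $w_h$ and controlled energy — for instance the VEM interpolant of a continuous $H^1$ extension, or simply the function whose interior degrees of freedom are chosen to match the appropriate moments of such an extension; what matters is a bound $|E w_h|_{1,\oml}\lesssim |w_h|_{1/2,\partial\oml}$, which is the standard stable-extension estimate for the trace space $H^{1/2}(\partial\oml)$ combined with the stability of the VEM interpolation operator (Assumptions 2.1 and the quasi-uniformity). Then by \eqref{elemequiv} and energy minimality,
\[
|\Lifthl w_h|_{1,\oml}^2 \simeq a_h^\ell(\Lifthl w_h,\Lifthl w_h) \le a_h^\ell(E w_h, E w_h) \simeq |E w_h|_{1,\oml}^2 \lesssim |w_h|_{1/2,\partial\oml}^2.
\]
For the lower bound, I would use the trace theorem on $\oml$: any $v\in H^1(\oml)$ satisfies $|v|_{1/2,\partial\oml}\lesssim |v|_{1,\oml}$ (with scaled norms, the constant depends only on the shape-regularity of $\oml$, i.e.\ Assumption 3.1). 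Applying this to $v=\Lifthl w_h$ and recalling $\Lifthl w_h|_{\partial\oml}=w_h$ gives $|w_h|_{1/2,\partial\oml}\lesssim |\Lifthl w_h|_{1,\oml}$, which is the reverse inequality.

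The main obstacle is the upper bound, specifically producing the stable VEM extension $E w_h$ with energy controlled by $|w_h|_{1/2,\partial\oml}$ while working only with the degrees of freedom of a virtual function. One has to be careful that the boundary data $w_h$ is itself only a piecewise polynomial on $\partial\oml$ (an element of $W_h^\ell$), so the continuous extension must first be built and then VEM-interpolated, and the interpolation error and stability must not introduce $h$- or $H$-dependent constants; this is where Assumptions 2.1–2.2 and the scaled-norm bookkeeping do the real work, essentially reducing everything to the standard finite element extension theory via \eqref{elemequiv}. I expect the trace-theorem direction (lower bound) to be routine, while the extension direction will require invoking a VEM interpolation-stability result (as in \cite{basicVEM,beirao_var_coef}) to close the argument cleanly.
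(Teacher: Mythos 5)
Your overall strategy coincides with the paper's: well-posedness from \eqref{elemequiv} on the finite-dimensional space $\mathring V_h^\ell$, the lower bound from the (scaled) trace theorem, and the upper bound by comparing $\Lifthl w_h$ with a stable discrete extension of $w_h$ (you argue by energy minimality, the paper by the equivalent Galerkin-orthogonality computation plus the triangle inequality). The genuine gap is in the one step you flag but leave to a citation: the construction of $E w_h \in V_h^\ell$ with $E w_h = w_h$ on $\partial\oml$ and $| E w_h |_{1,\oml} \lesssim | w_h |_{1/2,\partial\oml}$. Neither device you propose works as stated. The plain VEM degree-of-freedom interpolant of a continuous extension (say the harmonic lifting $w_h^{\mathcal H}$) is not even defined, because an $H^1(\oml)$ function has no point values at the nodes lying on the edges of $\Thk$ interior to $\oml$; and ``matching the moments of the extension'' only fixes the internal moment degrees of freedom, not those interior skeleton nodal values. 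The standard VEM interpolation and stability results you appeal to (as in \cite{basicVEM,beirao_var_coef}, or Proposition 4.2 of \cite{Moraetal15}) are of Cl\'ement type: they are $H^1$-stable but do \emph{not} reproduce the boundary data, so the resulting function would not equal $w_h$ on $\partial\oml$ and could not serve as a competitor in your minimization.

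What actually closes the argument in the paper is a boundary-preserving quasi-interpolant: a Scott--Zhang-type operator $\Pscottzhang : H^1(\oml) \to V_h^\ell$, obtained by modifying the Cl\'ement-type construction of \cite{Moraetal15} along the lines of \cite{BrennerScott}, which satisfies the stability estimate \eqref{stimaSZ} and, crucially, $\Pscottzhang v = v$ on $\partial\oml$ whenever $v|_{\partial\oml} \in W_h^\ell$. Setting $L_h^\ell w_h = \Pscottzhang (w_h^{\mathcal H})$ then yields the stable extension with the correct trace, after which your minimality (or the paper's orthogonality) argument and the trace-theorem direction go through verbatim. So the missing idea is not the structure of the proof but the existence of an $H^1$-stable quasi-interpolation operator onto $V_h^\ell$ that preserves traces already belonging to $W_h^\ell$; without supplying (or correctly citing) such an operator, the upper bound is not established.
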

	
	\begin{proof}
		We start by remarking that 	there exists a linear operator $\Pscottzhang: H^1(\Omega^\ell) \to V^\ell_h$ such that, for $v \in H^{1+s}$, $0 \leq s \leq k$ one has 
		\begin{equation}\label{stimaSZ}
			\| v - \Pscottzhang v \|_{0,\Omega^\ell} + h | v - \Pscottzhang v |_{1,\Omega^\ell} \lesssim 
			h^{s+1} | v |_{s+1,\Omega^\ell} .
		\end{equation}
		Moreover $\Pscottzhang$ can be constructed in such a way that if $v|_{\partial\Omega^\ell} \in W_h^\ell$ one has $\Pscottzhang v = v$ on $\partial\Omega^\ell$.
		This is achieved by modifying the proof of Proposition 4.2 in \cite{Moraetal15} by replacing the Cl\'ement operator by a Scott-Zhang type operator, as constructed, for instance, in equation (4.8.6) in \cite{BrennerScott}. In view of this result it is not difficult to construct an operator $L_h^\ell: W_h^\ell \to V_h^\ell$ satisfying
		$L^\ell_h w_h |_{\partial\oml} = w_h$ and 
		\begin{equation}\label{stab1} | L^\ell_h w_h |_{H^1(\oml)} \leq | w_h |_{H^{1/2}(\partial\Omega^\ell)}.
		\end{equation}
		In fact, letting $w_h^{\mathcal{H}} \in H^1(\omk)$	denote the harmonic lifting of $w_h$, we can define  
		\[
		L_h^\ell w_h = \Pscottzhang (w_h ^{\mathcal{H}}). 
		\]	
		Equation~\eqref{stab1} follows from the stability of the continuous harmonic lifting and of $\Pscottzhang$.	
		We now observe that $ \Lifthl w_h - L_h^\ell w_h \in \mathring V_h^\ell$ satisfies
		\[
		a_h^\ell (\Lifthl w_h - L_h^\ell w_h,v_h) = -a_h^\ell(L_h^\ell w_h,v_h)\quad \forall v_h \in \mathring V^\ell_h.
		\]
		We can then write
		\begin{gather*}
			| \Lifthl w_h - L_h^\ell w_h |^2_{1,\oml} \lesssim a^\ell_h(\Lifthl w_h - L_h^\ell w_h,\Lifthl w_h - L_h^\ell w_h) = -a_h^\ell (L_h^\ell w_h,\Lifthl w_h - L_h^\ell w_h) \\ \lesssim
			| L_h^\ell w_h |_{1,\Omega^\ell}   | \Lifthl w_h  - L_h^\ell w_h |_{1,\Omega^\ell},
		\end{gather*}
		yielding
		\[
		| \Lifthl w_h - L_h^\ell w_h |_{1,\oml}  \lesssim | L_h^\ell w_h |_{1,\Omega} \lesssim | w_h |_{1/2,\Omega}.
		\]
		By triangular inequality we then obtain
		\[
		| \Lifthl w_h |_{1,\oml}  \lesssim | L_h^\ell w_h |_{1,\Omega^\ell}+| \Lifthl w_h  - L_h^\ell w_h|_{1,\Omega^\ell} \lesssim	| w_h |_{1/2,\partial\oml}.
		\]	
	\end{proof}
	
	\
	
	For $w_h = (w_h^\ell)_\ell \in W_h$ we let $\Lifth(w_h) = (\Lifthl w_h^\ell)_\ell$, 
	so that we can split $V_h$ as $\mathring V_h \oplus \Lifth \widehat W_h$. 
	
	\

	We can then define a bilinear for $s:W_h \times W_h \to \mathbb{R}$ as
	\[
	s(w_h,v_h) := \sum_\ell a^\ell_h (\Lifthl w_h,\Lifthl v_h).
	\]
	Problem \ref{discrete_full} can be split as the combination of two independent problems
	\begin{problem}\label{interior} Find $\mathring{u}_h \in \mathring V_h$ such that for all $v_h \in \mathring V_h$ 
		\[
		a_h (\mathring u_h,v_h) = f_h(v_h).
		\]
	\end{problem}
	
	\begin{problem}\label{schur} Find $w_h \in \hW$ such that for all $v_h \in \hW$
		\[
		s(w_h,v_h) = \langle f , \mathcal{L}_h v_h \rangle.
		\]
	\end{problem}
	
	The proof of the following proposition is trivial.
	\begin{proposition} For all $v_h, w_h \in W_h$ we have
		\begin{equation} \label{contcoercschur}
			s(v_h,w_h) \lesssim | w_h |_{1/2,*} | v_h |_{1/2,*}, \qquad s(w_h,w_h) \gtrsim | w_h |_{1/2,*}^2.
		\end{equation}	
	\end{proposition}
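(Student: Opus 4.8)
The plan is to prove the two bounds in \eqref{contcoercschur} by reducing everything to the local lifting operator $\Lifthl$ and then invoking Proposition \ref{prop:lifting}. Recall that by definition $s(w_h,v_h) = \sum_\ell a_h^\ell(\Lifthl w_h^\ell, \Lifthl v_h^\ell)$, and that from \eqref{elemequiv} (summed over $K \in \Thk$) we have the local spectral equivalence $a_h^\ell(u,u) \simeq |u|_{1,\oml}^2$ together with the continuity $a_h^\ell(u,v) \lesssim |u|_{1,\oml}|v|_{1,\oml}$, both of which were already recorded in the excerpt.

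First I would handle coercivity. Using the lower bound in the local equivalence, $a_h^\ell(\Lifthl w_h^\ell, \Lifthl w_h^\ell) \gtrsim |\Lifthl w_h^\ell|_{1,\oml}^2$, and then Proposition \ref{prop:lifting}, which gives $|\Lifthl w_h^\ell|_{1,\oml} \simeq |w_h^\ell|_{1/2,\partial\oml}$, so that $a_h^\ell(\Lifthl w_h^\ell,\Lifthl w_h^\ell) \gtrsim |w_h^\ell|_{1/2,\partial\oml}^2$. Summing over $\ell$ and using the definition $\snormWh{w_h}^2 = \sum_\ell |w_h^\ell|_{1/2,\partial\oml}^2$ yields $s(w_h,w_h) \gtrsim \snormWh{w_h}^2$, which is the second bound.

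Next I would do continuity. Apply the continuity of $a_h^\ell$ to get $a_h^\ell(\Lifthl w_h^\ell, \Lifthl v_h^\ell) \lesssim |\Lifthl w_h^\ell|_{1,\oml}|\Lifthl v_h^\ell|_{1,\oml}$, then Proposition \ref{prop:lifting} twice to replace both factors by $|w_h^\ell|_{1/2,\partial\oml}$ and $|v_h^\ell|_{1/2,\partial\oml}$ respectively. Summing over $\ell$ and applying the Cauchy--Schwarz inequality in $\ell$ gives $s(w_h,v_h) \lesssim \left(\sum_\ell |w_h^\ell|_{1/2,\partial\oml}^2\right)^{1/2}\left(\sum_\ell |v_h^\ell|_{1/2,\partial\oml}^2\right)^{1/2} = \snormWh{w_h}\,\snormWh{v_h}$, which is the first bound.

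Since both the statement and the ingredients are essentially immediate consequences of Proposition \ref{prop:lifting} and \eqref{elemequiv}, there is no real obstacle here; the only point requiring any care is the bookkeeping with the scaled norms and the implicit constants (making sure, in particular, that the constants hidden in $\simeq$ in Proposition \ref{prop:lifting} and in the local equivalence are independent of $\rho$, $h$, and $H$ in the sense fixed by the paper's $\lesssim$ convention), and the routine use of Cauchy--Schwarz over the subdomain index. This is indeed why the excerpt asserts that "the proof of the following proposition is trivial."
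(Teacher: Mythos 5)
Your proof is correct and is precisely the argument the paper has in mind: the paper omits it as ``trivial,'' and the intended route is exactly yours --- expand $s$ via the definition through $\Lifthl$, use the recorded equivalence $a_h^\ell(u,u)\simeq |u|_{1,\oml}^2$ and continuity of $a_h^\ell$, invoke Proposition \ref{prop:lifting} in both directions, and finish with Cauchy--Schwarz over the subdomain index. No gaps; nothing further is needed.
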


	\

	Following the approach of \cite{brennersung} we introduce the operators $\hSop:\hW \to \hW'$ and $\tSop: \tW \to \tW'$ defined respectively as
	\begin{equation}\label{S_tilde_hat}
		\langle \hSop w_h, v_h \rangle = s(w_h,v_h)\ \forall v_h \in \hW, \qquad  \langle \tSop w_h, v_h \rangle = s(w_h,v_h)\ \forall v_h \in \tW,
	\end{equation}
	and we let $\Rop: \hW \to \tW$ denote the natural injection operator.  We observe that  
	\[
	\hSop = \Rop^T \tSop \Rop.
	\]
	Problem \ref{schur} becomes 
	\begin{equation}\label{system_BDDC}
		\hSop u =f.
	\end{equation}

	\subsection*{The BDDC Preconditioner} In order to build the BDDC preconditioner, we introduce, as in \cite{Toselli.Widlund},  the weighted counting functions $\delta_\ell$ which are associated with each 
	$\partial \Omega^\ell$ and are defined for $\gamma\in[1/2,\infty)$ by a sum of contribution from $\Omega^\ell$ and its neighbors. For $i \in \mathcal{Y}^\ell$, we set
	\begin{equation}\label{deltai}
		\delta_\ell(y_i)= \frac{\sum_{j\in \Ni } \rho_j^\gamma}{\rho_\ell^\gamma}.
	\end{equation}
	As in \cite{Mandel_algebraic,Klawonn.Widlund.06}, 
	we define a scalar product (i.e.  a symmetric positive definite bilinear form) $d:W_h \times W_h \to \mathbb{R}$ defined as
	\begin{equation}\label{defscald}
		d(w_h,v_h) =  \sum_\ell \sum_{i \in \Yl} d^{\ell,i} v^\ell(y_i)w^\ell(y_i),
	\end{equation}
	where the scaling coefficients $d^{\ell,i}$ defined as 
	\begin{equation}\label{dli}
		d^{\ell,i} = (\delta_\ell(y_i))^{-1} = \frac{\rho_\ell^\gamma}{\sum_{j\in \Ni } \rho_j^\gamma}.  
	\end{equation}

	We next introduce the projection operator $\ED: \tW \to \hW$, orthogonal  with respect to the scalar product $d$:
	\begin{equation}\label{ED}
		d(\ED w_h,v_h) = d(w_h, v_h), \ \forall v_h \in \hW.
	\end{equation}
	
	\smallskip
	
	With this notation we can define the BDDC method, as the  preconditioned conjugate gradient  method applied to system \nref{system_BDDC};
	the BDDC preconditioner $\BDDC: \hW'\to \hW$ takes the form 
	\begin{equation}\label{precBDDC}
		\BDDC = \ED \tSop^{-1} \ED^T. 
	\end{equation}
	
	\
	
	\subsection*{The FETI-DP Preconditioner} Following \cite{brennersung}, we introduce the quotient space $\Sigma_h = \tW / \hW$. We let $\Lambda_h = \Sigma_h'$ denote its dual and we let
	$\Bop: \tW \to \Sigma_h = \Lambda_h'$ be the quotient mapping, defined as 
	\[
	\Bop w_h = w_h +\hW.
	\]
	Observe that two element $w_h$ and $v_h$ are representative of the same equivalence class in $\Sigma_h$ if and only if they have the same jump across the interface: for $E \in \EH$ with $E = \Gamma^m \cap \Gamma^\ell$ $w_h^m - w_h^\ell = v_h^m - v_h^\ell$. We can then identify $\Sigma_h$ with the set of jumps of elements of $W_h$. The quotient map $\Bop$ is then the operator that maps an element of $W_h$ to its jump on the interface.

	\

	Clearly
	\[ \hW = \ker(\mathcal{B}) = \{  w_h \in \tW: \ b(w_h,\lambda)=0,\ \forall \lambda\in \Lambda_h \},\]
	where $b: \tW \times \Lambda_h \to \mathbb{R}$ is defined  as $b(w_h,\lambda_h) = \langle \Bop(w_h),\lambda_h \rangle$.
	Problem \ref{discrete_full} is then equivalent to the following saddle point problem: find $w_h \in \tW$, $\lambda_h \in \Lambda_h$ solution to
	\[
	\tSop w_h - \Bop^T \lambda_h = \tilde f,\qquad \Bop w_h = 0,
	\]
	where $\tilde f \in W_h'$ is defined by 
	\[
	\langle \tilde f , w_h \rangle = \int_{\Omega} f \Lifth w_h = \sum_{\ell} \int_{\oml} f \Lifthl w_h^\ell.
	\]
	Eliminating $w_h$ we obtain a problem in the unknown $\lambda_h \in \Lambda_h$ of the form
	\begin{equation}\label{system_FETI}
		\Bop \tSop^{-1}\Bop^T \lambda_h = -\Bop\tSop^{-1}\tilde f.
	\end{equation}

	The SPD operator $\Bop \tSop^{-1}\Bop^T$ is the one that the FETI-DP preconditioner tackles.  Letting $\Bop^+ : \Sigma_h \to \tW$ be any right inverse of $\Bop$ (for $\eta \in \Sigma_h$, $\Bop^+ \eta$ is any element in $\tW$ such that $\Bop \Bop^+ \eta = w_h$), we set
	\[
	\Bop^T_D = (\IdtW-\ED) \Bop^+,
	\]
	where $\IdtW$ denotes the identity operator in the space $\tW$.
	Remark that the definition of $\Bop_D^T$ is independent of the actual choice of $\Bop^+$. Indeed for $v_1$ and $v_2$ such that $\Bop v_1 = \Bop v_2$ we have $\Bop_D^T v_1 - \Bop_D^T v_2 = (v_1-v_2) - \ED (v_1-v_2) = 0$ where the last equality derives from the fact that since $\Bop v_1=\Bop v_2$, we have $v_1-v_2 \in \hW$.
	
	\
	
	The preconditioner $\FETI: \Sigma_h \to \Lambda_h$  for \nref{system_FETI} takes the form
	\begin{equation}\label{prec_FETI}
		\FETI = \Bop_D \tSop \Bop_D^T.
	\end{equation}

	\section{Analysis of the two preconditioners}
	
	Let us now provide bounds on the condition number of the preconditioned systems.  We observe that the space $W_h$ coincides with the analogous space that we would get by applying the same procedure in the finite element case with a triangular or polyhedral mesh sharing with $\Th$ the nodes on the interface. It is then reasonable to expect that by defining BDDC and FETI-DP preconditioners as in the finite element case we would obtain the same estimates on the dependence of the condition number on $H$, $h$ and $k$. 
	A possible way of analyzing the method could be to introduce an auxiliary standard finite element mesh inducing the same trace spaces on $\Gamma$, and to prove  that this can be done while satisfying the assumptions needed for the analysis of the preconditioners in the FEM case. This would allow to carry over the results for the FEM method to the VEM method. Here, however, we prefer to start from scratch and, in view of a future extension to the three dimensional case, providing a proof that will be mostly independent on the properties of the tessellation inside the subdomains.  
	We start by remarking that, by construction, we are in the framework of \cite{MandelSousedik}, yielding the equivalence between the BDDC and the FETI-DP preconditioners. In particular we have the identity
	\[
	\Bop_D^T \Bop + \ED = \IdtW.
	\]
	In fact, it is not difficult to see that for all $w_h \in \tW$ we have that $(\IdtW  - \Bop_D^T \Bop)w_h \in \hW$ and that we have \[d((\IdtW  - \Bop_D^T \Bop)w_h,v_h) = d(w_h , v_h)  \quad \text{ for all }v_h \in \hW.\]
	
	Then, letting 
	\[
	\omega_{FETI-DP} = \kappa(\FETI(\Bop \tSop^{-1} \Bop^T)), \qquad  \omega_{BDDC}= \kappa(\BDDC \hSop),
	\]
	denote the condition numbers of the operators $\FETI(\Bop \tSop^{-1} \Bop^T)$ and $\BDDC \hSop$, we have (\cite{MandelSousedik})
	\begin{equation}\label{minimal1}
		\omega_{BDDC} \lesssim \max_{w_h \in \tW} \frac {s(\ED w_h, \ED w_h)} {s(w_h,w_h)}, 
		\qquad \omega_{FETI-DP} \lesssim \max_{w_h \in \tW} \frac {s(\Bop_D^T\Bop w_h, \Bop_D^T\Bop w_h)} {s(w_h,w_h)}, 
	\end{equation}
	and
	\begin{equation}\label{minimal2}
		\omega_{BDDC} \simeq \omega_{FETI-DP}.
	\end{equation}
	In order to have a bound on both condition numbers, we then only need to bound $s(\ED w_h, \ED w_h)$ in terms of $s(w_h,w_h)$. 
	
	\
	
	We start by proving a technical Lemma. For $w_h \in W_h$ let us consider the splitting
	\begin{equation}\label{splitting}
		w_h = w_H + \mathring w_h,
	\end{equation}
	where $w_H^\ell(y_i) = w^\ell_h(y_i)$ for all $i \in \Xl$, and $w_H^\ell(y_i) = 0$ for all $i \in \Yl\setminus \Xl$ ($w_H^\ell$ coincides with $w_h^\ell$ at the vertices of  $\oml$ and it is zero at all other nodes on $\partial\oml$). 
	We have the following Lemma (\cite{Smith91,BPSIV}), for which we present a new proof that only relies on the properties of space $W_h$ and is independent of the properties of the tessellation in the interior of the subdomains $\Omega^\ell$.
	\begin{lemma}\label{lem:technical}
		For all $w_h \in W_h^E$, $E \in \EH$, letting 
		$\mathring w_h \in W_h^E$ be defined by $\mathring w_h(y_i) = w_h(y_i)$ for all $i \in \YE \setminus \XE$, and $\mathring w_h(y_i) = 0$ for $i \in \XE$,
		we have
		\[
		\| \mathring w_h \|_{H^{1/2}_{00}(E)} \lesssim (1 + \log(Hk^2/h)) | w_h |_{H^{1/2}(E)} + \sqrt{1 + \log(Hk^2/h)} \| w_h \|_{L^\infty(E)}.
		\]
	\end{lemma}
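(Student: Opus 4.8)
The goal is to bound $\|\mathring w_h\|_{H^{1/2}_{00}(E)}$, where $\mathring w_h$ is the function in the one-dimensional finite element space $W_h^E$ that agrees with $w_h$ at the interior interface nodes of $E$ and vanishes at the two endpoints (cross-points) of $E$. The standard obstruction is that the $H^{1/2}_{00}$-norm weights the values near the endpoints heavily, so the truncation at the cross-points must be analyzed carefully. The plan is to reduce the problem to one-dimensional, scale-invariant estimates on the reference interval, using only the inverse and direct inequalities \nref{inverse} and \nref{direct} for the space $W_h^E$, which is a quasi-uniform order-$k$ finite element space of mesh size $h$ on $E$ (of length $\simeq H$). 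This way the interior virtual element degrees of freedom never enter.

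First I would write $\mathring w_h = w_h - w_{h,\X}$, where $w_{h,\X} \in W_h^E$ is the part of $w_h$ "living at the cross-points": more precisely $w_{h,\X}$ is the finite element function that equals $w_h$ at the two cross-points of $E$ and is zero at all interior nodes. Then by the triangle inequality it suffices to bound $\|w_h\|_{H^{1/2}_{00}(E)}$ (for the part away from the endpoints) and $\|w_{h,\X}\|_{H^{1/2}_{00}(E)}$ separately. For the cross-point part $w_{h,\X}$, I would use that it is supported (up to exponential decay in the Gauss--Lobatto sense, or exactly on the boundary elements) essentially on the two end-elements of the mesh, so that a direct computation with the explicit shape functions, or the quasi-uniformity and an inverse inequality applied on the end-elements, gives $\|w_{h,\X}\|_{H^{1/2}_{00}(E)}^2 \lesssim (1 + \log(Hk^2/h))\,\|w_h\|_{L^\infty(E)}^2$; this is where the $\sqrt{1+\log(Hk^2/h)}\,\|w_h\|_{L^\infty(E)}$ term comes from, the logarithm arising from the weight $\mathrm{dist}(x,\partial E)^{-1}$ integrated over an element of size $h/k^2$ at distance $\simeq h/k^2$ from the endpoint, against the length $H$.

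Second, for $\|w_h\|_{H^{1/2}_{00}(E)}$: using the characterization $\|w_h\|_{H^{1/2}_{00}(E)}^2 \simeq |w_h|_{H^{1/2}(E)}^2 + \int_E \frac{|w_h(x)|^2}{\mathrm{dist}(x,\partial E)}\,dx$, the first term is already the quantity $|w_h|_{H^{1/2}(E)}$ we want. The weighted $L^2$ term is the crux: I would split the integral into the part over the region at distance $\geq \delta$ from $\partial E$ (bounded by $\log(H/\delta)\,\|w_h\|_{L^\infty(E)}^2$) and the part within $\delta$ of each endpoint, choosing $\delta \simeq h/k^2$ so the latter piece lives on $O(1)$ mesh elements and can be estimated using the inverse inequality \nref{inverse} together with a Poincaré/embedding argument expressing the near-endpoint $L^\infty$ or $L^2$ values of $w_h$ in terms of $|w_h|_{H^{1/2}(E)}$ and $\|w_h\|_{L^\infty(E)}$. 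Collecting the two pieces gives $(1+\log(Hk^2/h))|w_h|_{H^{1/2}(E)}^2 + (1+\log(Hk^2/h))\|w_h\|_{L^\infty(E)}^2$ after the usual manipulations. I expect the main obstacle to be getting the power of the logarithm exactly right — i.e. ensuring the $|w_h|_{H^{1/2}(E)}$-term picks up only a single factor $(1+\log(Hk^2/h))$ (appearing squared only after squaring the estimate, matching the $(1+\log)^2$ condition number) while the $L^\infty$-term picks up the square root — which requires carefully tracking whether a given estimate contributes additively inside the square or multiplicatively outside, and using the $k^2$ rather than $k$ in the inverse inequality to absorb the right factors. A secondary technical point is handling the Gauss--Lobatto nodal structure of $W_h^E$ when identifying the "end-element" contributions, which I would deal with by working with the $H^{1/2}(E)$-stable finite element interpolant rather than nodal values directly.
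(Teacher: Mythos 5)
The central step of your plan fails as stated. You split $\mathring w_h = w_h - w_{h,\mathcal X}$, where $w_{h,\mathcal X}\in W_h^E$ agrees with $w_h$ at the two cross-points and vanishes at the interior nodes, and then propose to bound $\| w_h \|_{H^{1/2}_{00}(E)}$ and $\| w_{h,\mathcal X}\|_{H^{1/2}_{00}(E)}$ separately by the triangle inequality. Neither quantity is finite in general: the $H^{1/2}_{00}(E)$ norm contains the weighted term $\int_E |v(x)|^2/\operatorname{dist}(x,\partial E)\,dx$, which diverges for every continuous $v$ with a nonzero value at an endpoint of $E$. Both $w_h$ and $w_{h,\mathcal X}$ take the (generically nonzero) cross-point values of $w_h$ at $\partial E$, so both have infinite $H^{1/2}_{00}$ norm. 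In particular your claimed estimate $\|w_{h,\mathcal X}\|^2_{H^{1/2}_{00}(E)}\lesssim (1+\log(Hk^2/h))\|w_h\|^2_{L^\infty(E)}$ is false --- on the end element the integrand behaves like $|w_h(\text{endpoint})|^2/\operatorname{dist}(x,\partial E)$ --- and, in your second step, no inverse inequality or Poincar\'e argument can control the portion of $\int_E |w_h|^2/\operatorname{dist}(x,\partial E)\,dx$ within $\delta$ of the endpoints, because that integral is genuinely infinite whenever $w_h$ does not vanish there. The whole point of passing to $\mathring w_h$ is the cancellation at the cross-points, and the triangle inequality in the $H^{1/2}_{00}$ norm throws that cancellation away.

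The standard repair (the Smith / Bramble--Pasciak--Schatz route, \cite{Smith91,BPSIV}) is to split only the seminorm, $|\mathring w_h|_{H^{1/2}(E)}\le |w_h|_{H^{1/2}(E)}+|w_{h,\mathcal X}|_{H^{1/2}(E)}$ with $|w_{h,\mathcal X}|_{H^{1/2}(E)}\lesssim \|w_h\|_{L^\infty(E)}$ (the end shape functions have scale-invariant, order-one $H^{1/2}$ seminorm), and to estimate the weighted term for $\mathring w_h$ itself: away from the endpoints $|\mathring w_h|\le\|w_h\|_{L^\infty(E)}$ and the weight integrates to a logarithmic factor, while on the end elements one exploits that $\mathring w_h$ vanishes at the endpoint together with a Markov-type inverse estimate. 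Note also that even after this repair your argument differs from the paper's proof, which never touches the weight function: there the truncation is identified with $i_h^0\circ\pi_h$ ($L^2$-projection followed by zeroing the cross-point values), its $L^2$- and $H^1_0$-stability are proved, one interpolates, and the conclusion follows from the embedding $\|w\|_{H^s_0(E)}\lesssim (1-2s)^{-1}|w|_{H^s(E)}+(1-2s)^{-1/2}\|w\|_{L^\infty(E)}$, the inverse inequality, and the choice $\varepsilon\simeq 1/(1+\log(H/h))$; the case $k\ge 3$ is reduced to $k=1$ via the piecewise linear interpolant on the Gauss--Lobatto grid of spacing $\simeq h/k^2$, which is where the factor $k^2$ enters. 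Your closing remark about an $H^{1/2}$-stable interpolant at the Gauss--Lobatto nodes is in the spirit of that reduction, but as written it does not rescue the flawed splitting.
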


	\begin{proof} 
		We start by proving the result for the lowest order VEM method ($k=1$). We let  $\mathring W_h^E$ be defined as
		\[
		\mathring W_h^E= \{ w_h \in W_h^E: \ w_h(y_i)=0 \text{ for all }i\in \XE \}.
		\]
		Let $\pi_h: L^2(E) \to W_h^E$ and $\pi_h^0: L^2(E) \to \mathring W_h^E$ denote  the $L^2$-projection onto respectively $W_h^E$ and $\mathring W_h^E$.
		Moreover, let $i_h^0: W_h^E \to \mathring W_h^E$ be defined by
		\[
		i_h^0 w_h (y_i) = w_h(y_i) \ \text{ for all } i \in \YE\setminus\XE.
		\]
		
		Remark that for $w_h \in W_h^E$ we have
		\[
		\| i_h^0 w_h \|_{L^2(E)}^2 \lesssim h \sum_{i\in \YE\setminus\XE} |w_h(y_i)|^2 \lesssim h \sum_{i\in \YE} |w_h(y_i)|^2 \lesssim  \| w_h \|^2_{L^2(E)}.
		\]
		Consider now the operator $\pi_h^1 = i_h^0 \circ \pi_h : L^2(E) \to \mathring W_h^E$. We easily see that $\pi_h^1$ is $L^2$ bounded: for all $w \in L^2(E)$,
		\[
		\| \pi_h^1 w \|_{L^2(E)} \lesssim \| \pi_h w \|_{L^2(E)} \lesssim \| w \|_{L^2(E)}.
		\]
		On the other hand, observing that $i_h^0 \circ \pi_h^0 = \pi_h^0$, we have that, for $w \in H^1_0(E)$
		\begin{align*}
			\| w - \pi_h^1 w \|_{L^2(E)} &= \| w - \pi_h^0 w +
			i_h^0 \pi_h^0 w - i_h^0 \pi_h w \|_{L^2(E)} \lesssim \\
			&\lesssim
			\| w - \pi_h^0 w \|_{L^2(E)} + \| \pi_h^0 w - \pi_h w \|_{L^2(E)}.
		\end{align*}
		By adding and subtracting $w$ in the second term on the right hand side we obtain, for $w \in H^1_0(E)$,
		\begin{gather*}
			\| w - \pi_h^1 w \|_{L^2(E)} \lesssim
			\| w - \pi_h^0 w \|_{L^2(E)} + \| w - \pi_h w \|_{L^2(E)} \lesssim
			\frac h H | w |_{H^1(E)}.
		\end{gather*}
		This allows us to prove, by a standard argument, that $\pi_h^1$ is $H^1_0$-bounded. In fact, letting $\Pi_h^1: H^1_0(E) \to \mathring W_h^E$ denote the $H^1_0(E)$ projection onto $\mathring{W}_h^E$, defined as
		\[
		\Pi_h^1(w)	= \arg \min_{v_h \in \mathring{W}_h^E} \left(\frac 1 2 | v_h |_{1,E}^2 - \langle w, v_h \rangle\right),
		\]
		we have
		\begin{align*}
			| \pi_h^1 w |&_{H^1(E)} \lesssim \\
			&\lesssim
			| w |_{H^1(E)} +\left(\frac h H\right)^{-1}\| \pi_h^1 w -
			\Pi_h^1 w \|_{L^2(E)}\lesssim \\
			&\lesssim
			| w |_{H^1(E)} + \left(\frac h H\right)^{-1} \| \pi_h^1 w -
			w \|_{L^2(E)} + \left(\frac h H\right)^{-1} \| w - \Pi_h^1
			w \|_{L^2(E)} \lesssim\\
			& \lesssim | w |_{H^1(E)} +\left(\frac h H\right)^{-1}
			\left(\frac h H\right) | w |_{H^1(E)} + \left(\frac h H\right)^{-1}
			\left(\frac h H\right) | w |_{H^1(E)}\lesssim 
			| w |_{H^1(E)}.
		\end{align*}
		By space interpolation we then deduce that $\pi_h^1$ is uniformly $H^s_0(E)$ bounded for all $s \in [0,1]$, $s\not = 1/2$, that is for all $w \in H^s_0(E)$ we have
		\[
		\| \pi_h^1 w \|_{H^s_0(E)} \lesssim \|  w \|_{H^s_0(E)},
		\]
		with a constant independent of $s$, whereas for $w \in H^{1/2}_{00}(E)$ we have
		\[ \| \pi_h^1 w \|_{H^{1/2}_{00}(E)} \lesssim  \| w \|_{H^{1/2}_{00}(E)}.\]
		
		\
		{We now recall that for $0< s< 1/2$, the space $H^s(E)$ is embedded in $H^s_0(E)$ and we have (\cite{Bsub3field})
			\[
			\| w \|_{H^s_0(E)} \lesssim \frac {1}{1-2s} | w |_{H^s(E)} + \frac {1} {\sqrt{1-2s}} \| w \|_{L^\infty(E)}.
			\]}
		Then,  for $\varepsilon \in
		]0,1/2[$ and  $w_h \in W_h^E$ we have
		\begin{align*}
			\| \pi_h^1 w_h \|_{H^{1/2}_{00}(E)} &\lesssim \left(\frac h H\right)^{-\varepsilon} \| \pi_h^1 w_h
			\|_{H^{1/2-\varepsilon}_0(E)} \lesssim
			\left(\frac h H\right)^{-\varepsilon}
			\| w_h \|_{H^{1/2-\varepsilon}_0(E)} \lesssim \\
			&  \lesssim \frac {1}{\varepsilon} \left(\frac{h}{H}\right)^{-\varepsilon} |
			w_h |_{H^{1/2-\varepsilon}(E)} 
			+ \frac {1}{\sqrt{\varepsilon}} \left(\frac{h}{H}\right)^{-\varepsilon} \|
			w_h \|_{L^\infty(E)} \\
			&	\lesssim  (1 + \log(H/h)) | w_h
			|_{H^{1/2}(E)} + \sqrt{1+\log(H/h)} \| w_h \|_{L^{\infty}(E)},
		\end{align*}
		where the last inequality is obtained by choosing
		$\varepsilon = 1/ |1+\log(H/h)|$ .
		Observing that for $w_h \in  W_h^E$ we have
		\[
		\mathring w_h^\ell|_E =  i_h^0 w_h|_E = \pi_h^1 w_h|_E,
		\]
		we immediately get the thesis for $k=1$. For $k=2$ the result is proven analogously. In order to prove the result for $k \geq 3$, we proceed as in \cite{Toselli.Widlund}, Section 7.4.1. We let $w_h^\star$ and $\mathring w_h^\star$ denote the 
		two functions which are piecewise linear on the grid with mesh size $\gtrsim h^\star = h/k^2$ induced on $E$ by the nodes	$y_i$, $i \in \YE$, and interpolating respectively
		$w_h$ and $\mathring w_h$ at such nodes. We know (see \cite{Canuto.1994}) that the following equivalences hold
		\[
		\| w_h^\star \|_{0,E} \simeq \| w_h \|_{0,E}, \text{ and }\ 	\| w_h^\star \|_{1,E} \simeq \| w_h \|_{1,E},
		\]
		whence
		\[
		| w_h^\star |_{H^{1/2}(E)} \simeq 	| w_h |_{H^{1/2}(E)} \text{ and }\ 	| \mathring w_h^\star |_{H^{1/2}_{00}(E)} \simeq | \mathring w_h^\star |_{H^{1/2}_{00}(E)}.
		\]
		Then, applying the Lemma for $k = 1$ and $\mathring w_h^\star$ and $w_h^\star$, we obtain
		\begin{align*}
			\| \mathring w_h \|_{H^{1/2}_{00}(E)} &\lesssim 	\| \mathring w^\star_h \|_{H^{1/2}_{00}(E)} \lesssim \\
			&	\lesssim (1 + \log(H/h^\star)) | w_h^\star |_{H^{1/2}(E)}  + 
			\sqrt{ 1 + \log(H/h^\star)} \| w_h^\star \|_{L^{\infty(E)}}\\
			&	\lesssim (1 + \log(Hk^2/h)) | w_h |_{H^{1/2}(E)}  + \sqrt{ 1 + \log(Hk^2/h)} \| w_h \|_{L^{\infty(E)}}.
		\end{align*}
	\end{proof}

	We are now able to prove the following lemma.
	\begin{lemma} \label{lem:5.3} For all $w_h \in \tW$ we have
		\begin{equation}\label{faclogE}  s( \ED w_h,\ED w_h) \lesssim (1 + \log(Hk^2/h))^2 s(w_h,w_h). \end{equation}
	\end{lemma}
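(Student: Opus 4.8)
The plan is to reduce the global estimate to a sum of local (edge-by-edge) estimates of the type provided by Lemma \ref{lem:technical}, following the classical FETI-DP/BDDC argument of Toselli--Widlund adapted to the norm $\snormWh{\cdot}$. First I would recall that $s(w_h,w_h) \simeq \snormWh{w_h}^2 = \sum_\ell |w_h^\ell|_{H^{1/2}(\partial\oml)}^2$ by \eqref{contcoercschur}, so it suffices to bound $\snormWh{\ED w_h}^2$ by $(1+\log(Hk^2/h))^2 \snormWh{w_h}^2$. Then I would analyze $w_h - \ED w_h$: since $\ED$ is the $d$-orthogonal projection onto $\hW$, the difference $w_h - \ED w_h$ is supported, edge by edge, on the jumps of $w_h$ across each macro-edge $E$, weighted by the counting-function coefficients $d^{\ell,i}$. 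Concretely, for a macro-edge $E = \Gamma^\ell \cap \Gamma^m$, on $\partial\oml$ the function $(w_h - \ED w_h)^\ell|_E$ is, up to the scaling factor $d^{m,i}$ (which is bounded by $1$ and, crucially, is chosen so that the standard ``$\rho$-independence'' estimate $\min\{\rho_\ell,\rho_m\}(d^{\ell,i})^2 \lesssim \min\{\rho_\ell,\rho_m\}$-type bounds hold), the $W_h^E$-function vanishing at the cross-points $\XE$ and equal to the jump $(w_h^m - w_h^\ell)(y_i)$ at the interior nodes $i \in \YE\setminus\XE$.

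Next I would invoke Lemma \ref{lem:technical} on each macro-edge: writing the nodal jump function $J_E \in W_h^E$ with $J_E(y_i) = (w_h^m-w_h^\ell)(y_i)$, its ``interiorization'' $\mathring J_E$ satisfies
\[
\| \mathring J_E \|_{H^{1/2}_{00}(E)}^2 \lesssim (1+\log(Hk^2/h))^2 | J_E |_{H^{1/2}(E)}^2 + (1+\log(Hk^2/h)) \| J_E \|_{L^\infty(E)}^2.
\]
The term $|J_E|_{H^{1/2}(E)}^2 \lesssim |w_h^m|_{H^{1/2}(\partial\omk)}^2 + |w_h^\ell|_{H^{1/2}(\partial\oml)}^2$ by the triangle inequality and stability of restriction to $E$. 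For the $L^\infty$ term I would use the one-dimensional discrete Sobolev inequality on $W_h^E$, namely $\|v_h\|_{L^\infty(E)}^2 \lesssim (1+\log(Hk^2/h)) \|v_h\|_{H^{1/2}(E)}^2$ (a standard inverse-type estimate for the order-$k$ FE space on the grid of mesh size $h^\star = h/k^2$, consistent with \eqref{inverse}), so that altogether
\[
\| \mathring J_E \|_{H^{1/2}_{00}(E)}^2 \lesssim (1+\log(Hk^2/h))^2 \left( |w_h^\ell|_{H^{1/2}(\partial\oml)}^2 + |w_h^m|_{H^{1/2}(\partial\omk)}^2 \right).
\]
Since each $(w_h-\ED w_h)^\ell$ decomposes as a sum over the macro-edges $E \subset \partial\oml$ of (scaled) interiorized jumps, using that extension by zero from $H^{1/2}_{00}(E)$ to $H^{1/2}(\partial\oml)$ is bounded, and that the number of macro-edges per subdomain is bounded by Assumption \ref{ass:macroel}, I would sum over $E$ and $\ell$ to get $\snormWh{w_h - \ED w_h}^2 \lesssim (1+\log(Hk^2/h))^2 \snormWh{w_h}^2$; the $\rho$-factors cancel exactly as in \cite{Toselli.Widlund} thanks to the choice \eqref{dli} of $d^{\ell,i}$ (this is where one uses $\gamma \geq 1/2$). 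Finally, by the triangle inequality $\snormWh{\ED w_h} \leq \snormWh{w_h} + \snormWh{w_h - \ED w_h}$ and \eqref{contcoercschur}, the claim \eqref{faclogE} follows.

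The main obstacle I anticipate is the bookkeeping around the cross-points and the $\rho$-weighting: one must check that $\ED$ acts on the values at cross-points as a weighted average (so $w_h - \ED w_h$ really vanishes there up to the jump structure) and that the $d^{\ell,i}$ scaling produces the coefficient-robust cancellation, while simultaneously the decomposition of $(w_h-\ED w_h)^\ell$ into edge contributions must be compatible with the extension-by-zero into $H^{1/2}_{00}(E)$ — this is precisely the point where Lemma \ref{lem:technical} is needed and where the polylogarithmic factor enters. The analytic ingredients (stability of restriction, boundedness of zero-extension from $H^{1/2}_{00}$, discrete Sobolev inequality) are standard once the trace space $W_h^E$ is identified with the order-$k$ 1D FE space, as already noted in the excerpt.
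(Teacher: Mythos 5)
Your proposal is correct and follows essentially the same route as the paper's proof: reduce $w_h-\ED w_h$ to edge-by-edge interiorized jumps (vanishing at cross-points), apply Lemma \ref{lem:technical} together with the discrete Sobolev bound \eqref{Bsubstr34} on each macro-edge, use the $\rho^\gamma$-weighted averaging in $d^{\ell,i}$ and the bound $\rho_\ell\bigl((\rho_\ell^\gamma+\rho_m^\gamma)^{-1}\rho_m^\gamma\bigr)^2\leq\min\{\rho_\ell,\rho_m\}$ for coefficient robustness, then sum via boundedness of extension by zero and conclude by the triangle inequality. The only caution is that the equivalence between $s(\cdot,\cdot)$ and the trace seminorms must be taken in the $\rho_\ell$-weighted form to keep the constants independent of the jumps, which your weighting argument implicitly does.
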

	
	\begin{proof}  We start by observing that $v_h = \ED w_h$ is uniquely defined by its single values $v_i$ at the nodes $y_i$, $i\in \mathcal{Y}$. A straightforward computation yields, for $i \in \mathcal{Y}$:
		\begin{equation}\label{defthetai}
			v_i = \theta^{-1}_i \sum_{k \in \Ni} \rho^\gamma_k w^m_h(y_i), \qquad \theta_i=\sum_{k\in \Ni} \rho_k^\gamma.
		\end{equation}

		Using the splitting (\ref{splitting}) for both $w_h$ and $v_h$ and observing that $v_H = w_H$, we have that
		\[
		\sum_{\ell} | w_h - v_h |^2_{1/2,\Gamma_\ell} \lesssim  \sum_{\ell} | \mathring w_h^\ell - \mathring v_h |^2_{1/2,\Gamma_\ell}.
		\]

		\
		
		We observe that, for a given macroedge $E = \partial\oml \cap \partial\omk$, we have
		\[ \mathring {v}_h{|_E} = (\rho_\ell^\gamma+\rho_m^\gamma)^{-1}(\rho_\ell^\gamma { \mathring w_h^\ell}|_{E}+ \rho_m^\gamma {\mathring w_h^m}|_{E}),\]
		whence
		\[
		(\mathring{w}_h^\ell -  \mathring {v}_h){|_E} = (\rho_\ell^\gamma+\rho_m^\gamma)^{-1} \rho_m^\gamma (\mathring w_h^\ell -\mathring w_h^m).
		\]
		
		Then, recalling that	\begin{equation}\label{boundrhogamma}
			\rho_\ell ((\rho_\ell^\gamma + \rho_m^\gamma)^{-1}\rho_m^\gamma)^2 \leq \min\{\rho_\ell,\rho_m\} \leq \rho_m,
		\end{equation} we can write
		\begin{align*}
			\sum_{\ell}\rho_\ell | \mathring w_h^\ell - \mathring v_h & |^2_{1/2,\Gamma^\ell} \lesssim \sum_{\ell} \sum_{E \in \El} \rho_\ell   | \mathring w_h^\ell - \mathring v_h |_{H^{1/2}_{00}(E)}^2 \\&\lesssim
			\sum_{\ell} \sum_{E = \partial\oml \cap \partial\omk \in \El} \rho_\ell(\rho_\ell^\gamma+\rho_m^\gamma)^{-2} \rho_m^{2\gamma} | \mathring w_h^\ell - \mathring w_h^m |_{H^{1/2}_{00}(E)}^2  \\ &=
			\sum_{E = \partial\oml \cap \partial\omk \in \mathcal{E}} 
			(\rho_\ell^\gamma+\rho_m^\gamma)^{-2}(\rho_\ell \rho_m^{2\gamma} + \rho_m \rho_\ell^{2\gamma} ) | \mathring w_h^\ell - \mathring w_h^m |_{H^{1/2}_{00}(E)}^2 \\
			&\lesssim 	\sum_{E = \partial\oml \cap \partial\omk \in \mathcal{E}} \min\{
			\rho_\ell,\rho_m\} | \mathring w_h^\ell - \mathring w_h^m |_{H^{1/2}_{00}(E)}^2 .
		\end{align*}
		We now recall that we have the following bound for any $w_h\in W_h^E$ with $w_h(x) = 0$  for some $x \in E$
		\begin{equation}\label{Bsubstr34}
			\| w_h \|_{L^{\infty}(E)} \lesssim \sqrt{(1+\log(Hk^2/h))} | w_h |_{H^{1/2}(E)}.
		\end{equation}
		This bound was proven for $k = 1$ in \cite{BPSI} as well as in \cite{Bsub3field}, where a proof only relying on the properties of $W_h^E$ was provided. It is straightforward to adapt such a proof to cover the case $k>1$.	Using Lemma \ref{lem:technical}, as well as (\ref{Bsubstr34})
		(which we can apply since $w_h^\ell - w_h^m$ vanishes at the extrema of $E$) we have
		\begin{align*}
			| \mathring w_h^\ell - \mathring w_h^m |_{H^{1/2}_{00}(E)}^2  &\lesssim (1 + \log(Hk^2/h))^2 | w_h^\ell -w_h^m|^2_{H^{1/2}(E)} + \\
			&  + (1 + \log(Hk^2/h)) \| w_h^\ell - w_h^m \|^2_{L^\infty(E)}	\lesssim  \\ &
			\lesssim (1 + \log(Hk^2/h))^2 | w_h^\ell -w_h^m|^2_{H^{1/2}(E)}\lesssim   \\& \lesssim (1 + \log(Hk^2/h))^2(| w_h^\ell|_{H^{1/2}(E)}^2+| w_h^m|_{H^{1/2}(E)}^2),
		\end{align*}
		whence we	easily obtain
		\begin{align*}
			\sum_{\ell}\rho_\ell | \mathring w_h^\ell - \mathring v_h |^2_{1/2,\Gamma^\ell} & \lesssim (1 + \log(Hk^2/h))^2 \sum_\ell \rho_\ell | w_h^\ell |_{H^{1/2}(\Gamma_\ell)}\lesssim \\
			&  \lesssim (1 + \log(Hk^2/h))^2 s(w_h,w_h).
		\end{align*}
	\end{proof}

	In view of (\ref{minimal1}) and (\ref{minimal2}) we have the following corollary
	
	\begin{corollary}\label{cond_bound}
		The following bound on the condition numbers for BDDC and FETI-DP holds
		\[
		\omega_{BDDC} \lesssim (1+\log(Hk^2/h))^2,\qquad \omega_{FETI-DP} \lesssim (1+\log(Hk^2/h))^2.
		\]
		
	\end{corollary}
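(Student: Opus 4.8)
The plan is to simply combine Lemma \ref{lem:5.3} with the abstract bounds \nref{minimal1}--\nref{minimal2} recalled from \cite{MandelSousedik}, so that no new analysis is needed. First I would note that Lemma \ref{lem:5.3} is precisely the statement
\[
\max_{w_h \in \tW,\ w_h \neq 0} \frac{s(\ED w_h,\ED w_h)}{s(w_h,w_h)} \lesssim (1+\log(Hk^2/h))^2,
\]
so that the first inequality in \nref{minimal1} immediately yields $\omega_{BDDC} \lesssim (1+\log(Hk^2/h))^2$.

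For the FETI-DP condition number there are two equivalent routes. The shortest is to invoke the spectral equivalence \nref{minimal2}, $\omega_{FETI-DP} \simeq \omega_{BDDC}$, which transfers the bound just obtained. Alternatively, and without leaving this section, I would use the identity $\Bop_D^T\Bop = \IdtW - \ED$ proved right before \nref{minimal1}: for $w_h \in \tW$ it gives $\Bop_D^T\Bop w_h = w_h - \ED w_h$, and since $s$ is a symmetric positive semidefinite bilinear form (see \nref{contcoercschur}) the triangle inequality for its induced seminorm together with Lemma \ref{lem:5.3} gives
\[
s(\Bop_D^T\Bop w_h,\Bop_D^T\Bop w_h)^{1/2} \leq s(w_h,w_h)^{1/2} + s(\ED w_h,\ED w_h)^{1/2} \lesssim (1+\log(Hk^2/h))\, s(w_h,w_h)^{1/2}.
\]
Squaring and feeding this into the second inequality in \nref{minimal1} closes the estimate for $\omega_{FETI-DP}$.

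All the genuine work has already been carried out in Lemmas \ref{lem:technical} and \ref{lem:5.3}, so this last step is pure bookkeeping and I do not expect any real obstacle. The only point worth a line of justification is that we are indeed inside the abstract framework of \cite{MandelSousedik}, i.e.\ that $\Bop_D^T\Bop + \ED = \IdtW$; this is exactly the identity checked above via the fact that $(\IdtW - \Bop_D^T\Bop)w_h \in \hW$ and $d((\IdtW - \Bop_D^T\Bop)w_h, v_h) = d(w_h,v_h)$ for all $v_h \in \hW$.
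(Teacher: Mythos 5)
Your proposal is correct and follows essentially the same route as the paper, which deduces the corollary directly from Lemma \ref{lem:5.3} together with the abstract bounds \nref{minimal1} and the spectral equivalence \nref{minimal2}. Your alternative derivation of the FETI-DP bound via the identity $\Bop_D^T\Bop = \IdtW - \ED$ and the triangle inequality for the seminorm induced by $s$ is also valid, but it is only a minor variant of the same argument.
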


	\

	\newcommand{\auxV}{V_{aux}}
	
	\subsubsection{Extension to other $H^1$ conforming VEM classes}\label{sec:extension} While for simplicity we considered in our analysis the simpler early version of the VEM method, where the local  space is defined by (\ref{deflocalspace}), 
	it is not difficult to realize that the analysis presented here carries over to other $H^1$ conforming versions of the method. In particular, several VEM methods make use of an auxiliary enlarged local space $\auxV^{K,k}$ defined as
	\[
	\auxV^{K,k} =   \{ v \in H^1(K):\ v|_{\partial K} \in \mathbb{B}_k(\partial K),\ \Delta v \in \mathbb{P}_{k}(K)\},
	\]
	and define the local space $V^{K,k}$ as the subspace obtained by requiring that a certain number of degrees of freedom, corresponding to high order moments, are a function of the remaining ones. This is the case of the VEM space used for treating problems with coefficients that vary within each element \cite{beirao_var_coef}, and of the nodal serendipity VEM \cite{beirao_serendipity}. 
	As also for these new spaces it holds that $\auxV^{K,k}|_{\partial K} = \mathbb{B}_k(\partial K)$, it is not difficult to realize that  our analysis carries  over to such cases, provided we can verify that  property (\ref{elemequiv}) and  Proposition \ref{prop:lifting} hold. These are indeed the only steps in our analysis which depend on the definition of $V^{K,k}$ within the elements. 
	Observe that  property (\ref{elemequiv}) is one of the basic requirements in the construction of the VEM stabilized operator (see, e.g. (5.4) and (5.23) in \cite{beirao_var_coef}). As far as Proposition \ref{prop:lifting}, its proof relies on the existence of a Scott-Zhang type operator satisfying an estimate of the form (\ref{stimaSZ}). The construction of such an operator, as proposed in \cite{Moraetal15} for the case here considered, carries over to other $H^1$ conforming VEM spaces, yielding (\ref{stimaSZ}) with constants possibly depending on $k$. It is in fact not difficult to verify that adapting such a construction to different definitions of the local space $V^{K,k}$ yields an operator that locally preserves polynomials of order $k$ (provided of course that they are contained in the local space).

	\begin{remark}
		As far as the possible extension to other PDEs or to non $H^1$ conforming methods
		such as $H(div)$ conforming or discontinuous Galerkin, the same issues arise in the virtual element framework as in the finite element case, and, in view of the analysis performed here, we believe that the same solution strategies apply, tough the treatment of such cases is not immediate and needs further work. 
	\end{remark}

	\section{Realizing the preconditioner}\label{algebraic}
	
	As already observed, both the  FETI-DP and the BDDC preconditioner for the virtual element method are the same as for the finite element method, and a vast literature exists on how such methods are efficiently implemented, that carries over, essentially as it is, to our case. Nevertheless, for the sake of those reader with little experience in domain decomposition,  we recall some details of the implementation, which we adapt to the notation used in the previous sections. 
	
	\subsection{Functions} Functions in the different spaces  are represented algebraically as follows.
	\begin{itemize} 
		\item functions $u^ \ell_h \in W_h^\ell$ are represented by the vectors $\vec{u}^ \ell \in \mathbb{R}^\Nl$ (with $\Nl = \#(\Yl)$) of their values at the nodes $y_i, i \in \Yl$;
		\item functions $u_h = (u_h^\ell)_\ell \in W_h$ will be represented by the vectors obtained by concatenating the $\vec u^\ell$s: $\vec u = (\vec u^1,\cdots,\vec u^\ell) \in \mathbb{R}^N$, $N = \sum_{\ell} \Nl$;
		\item functions $\hat u_h \in \hW$ will be represented by the vectors $\hat {\vec u} \in \mathbb{R}^{\hN}$ ($\hN = \#(\mathcal{Y})$) of their values at the nodes on $\Gamma$;
		\item functions $\tilde u_h \in \tW$ will be represented by the vectors $\tilde{\vec u} \in \mathbb{R}^{\tN}$,  of their single values at the crosspoints plus their double value at each of the nodes $y_i \in \mathcal{Y}\setminus\mathcal{X}$;
		\item the elements of $\Sigma_h$ ({\em i.e.} the jumps of functions in $\tW$) will be represented by vectors $\vec s \in \mathbb{R}^M$, $M = \#(\mathcal{Y}\setminus\mathcal{X})$, of jump values at the nodes $y_i$, $i \in \mathcal{Y} \setminus \mathcal{X}$. 
	\end{itemize}
	
	\noindent Corresponding to the splitting \nref{splitting}, the vectors $\widetilde{\vec u}  \in\Re^{\tN}$ are split as 
	\begin{equation}\label{discsplitting}
		\widetilde{\vec u} = [\vec u_{\Delta},\vec{u}_{\Pi}],
	\end{equation} 
	with $\vec u_\Pi \in \mathbb{R}^{\NP}$ being the vector of values at the cross points (primal nodes), $u_\Delta^\ell$ being the vector of values of $u_h^\ell$ at the points $y_i$, $i\in \Yl\setminus \Xl$ (dual nodes) and $\vec u_\Delta = (u_\Delta^1,\cdots,u_\Delta^L) \in \mathbb{R}^{\ND}$ . 
	Observe that, with the choices made in Section \ref{sec:dualprimal}, we have  $\ND = 2 \#(\mathcal{Y}\setminus\mathcal{X})$, $\NP = \#(\mathcal{X})$, and $N = \ND + \sum_{i \in \mathcal{X}} n_i$. 
	
	\subsection{Operators} In order to implement the BDDC and the FETI-DP methods, we need to construct (or implement the action of) matrices corresponding to the different operators introduced in Section \ref{sec:dualprimal}, and, in particular, to 
	$\ED$ and $\B^T_D$ and $\tSop^{-1}$. 
	
	\
	
	We start by letting $S_\ell \in \Re^{\Nl \times \Nl}$ denote the  $\Nl \times \Nl$ matrix realizing the operator
	$\mathcal{S}^\ell$, and we let $S \in \Re^{N\times N}$, defined by  
	\[
	S = \left(
	\begin{array}{ccc}
		S_1 & & \\
		& \ddots & \\
		& & S_L
	\end{array}
	\right),
	\]
	be the block diagonal matrix realizing the operator $\mathcal{S}$. Observe that the matrix $S_\ell$ is the Schur complement, taken with respect to the degrees of freedom on $\partial\Omega^\ell$, of the stiffness matrix $A_\ell$ discretizing the bilinear form $a_h^\ell$ on the space $V_h^\ell$ defined in \nref{ah}.
	Applying $S^\ell$ implies then solving by the VEM method a local version of problem (\ref{variational}) with non homogeneous boundary condition.
	
	\
	
	We let $\hR \in \Re^{N\times \hN} $ and $\tR \in \Re^{N \times \tN}$ denote the two matrices realizing the natural injection of $\hW$ and $\tW$ into $W_h$, respectively, and let $R$ denote the matrix realizing the injection of $\hW$ into $\tW$. These are matrices with $0$ and $1$ entries, whose action essentially consists in copying the value of the degrees of freedom in the right positions. With this notation, the matrices $\tS \in \Re^{\tN\times\tN}$ and $\hS \in \Re^{\hN\times \hN}$, realizing the operators $\tSop$ and $\hSop$, respectively, take the form
	\[
	\tS = \tR^T S \tR, \qquad \hS = \hR^T S \hR,
	\]
	and we have 
	\[
	\hS = R^T \tS R.
	\]

	\
	
	We let $D_\ell \in \Re^{\Nl \times \Nl}$ be the diagonal matrix whose diagonal entry on the line corresponding to the node $y_i, i \in \Yl$  is equal to the scaling coefficient 
	$d^{\ell,i} = 
	\rho_\ell^\gamma / 
	\sum_{j\in \Ni } \rho_j^\gamma$, and we assemble  $D \in \Re^{N\times N}$ as
	\[
	D = \left(
	\begin{array}{ccc}
		D_1 & & \\
		& \ddots & \\
		& & D_L
	\end{array}
	\right),
	\] 
	so that the scalar product $d: W_h \times W_h \in \Re$ defined by (\ref{defscald}) is realized by the matrix $D$. We easily check that we have $\hR^T D \hR = I_{\hN}$ (where $I_{\hN}$ denotes the $\hN \times \hN$ identity matrix). It is not difficult to verify that the matrix $E_D \in \Re^{\hN\times \tN}$ realizing the projector $\ED$ is
	\[
	E_D = \hR^T D \tR \in \Re^{\hN \times \tN}.
	\]
	In fact, letting $\widehat{\vec w}$ denote the vector of coefficients of  $\ED \widetilde u_h$, we have
	\[
	\widehat{\vec v}^T  \widehat {\vec w} = \widehat{\vec v}^T \, \hR^T D \hR \, \widehat {\vec w} = \widehat{\vec v}^T \, \hR^T D \tR \,\tilde{\vec u}, \qquad \text{ for all } \widehat{\vec v} \in \Re^{\hN}.
	\]
	
	\
	
	Let us now come to the construction of the matrices corresponding to the jump operator $\B$ and its pseudo inverse $\B_D^T$.  We let $B = [B_1, \dots, B_L] \in \Re^{M \times \tN}$ denote the matrix with entries in the set $\{-1,0,1\}$, representing the jump operator.  Observe that we have $BB^T = 2 I_M$, that is, in our case, $B^T/2$ is a right inverse of $B$, and this gives us the operator $\B^+$. Then $B^T_D \in \Re^{\tN \times M}$ can be defined as
	\[
	B^T_D = (I_{\tN} - R E_D)B^T/2.
	\] 
	By direct computation it is not difficult to verify that $\BD$ is a scaled version of B
	$$\BD = [ \DB_1 B_1, \dots, \DB_L B_L] $$
	where $\DB_\ell, l = 1,\dots, L$, are diagonal scaling matrices with entries related to the neighboring subdomain of $\Omega_\ell$.
	More precisely, each row of $B_\ell$ with a non zero entry corresponds to a node $y_i$, $i \in \Yl \setminus \Xl$, which verifies $y_i \in \partial \Omega_\ell \cap \partial \Omega_k$ for a unique $k$. The diagonal entry of $D^\flat_\ell$ corresponding to such node takes the value $d^{k,i} = \rho_k^\gamma/ \sum_{j\in \Ni } \rho_j^\gamma$.
	
	Then the system we deal with the BDDC method is
	\begin{equation}\label{system_BDDC_alg}
		\hS \hat{\vec u}  = \hat {\vec f}, 
	\end{equation}
	preconditioned with
	\begin{equation}\label{prec_BDDC_alg}
		M_{BDDC} = E_D \tSmat^{-1} E_D^T,
	\end{equation}
	matrix counterpart of \nref{system_BDDC}  and \nref{precBDDC} respectively.
	
	\
	
	The last matrix whose action we need to evaluate is $\tSmat^{-1}$, corresponding to the inverse of the operator $\tSop$. Different approaches have been proposed for its efficient implementation \cite{Klawonn2016,Klawonn2015,Klawonn.Rheinbach.10}. We start by observing that, corresponding to the splitting \nref{discsplitting}, the matrix $\tSmat$ can be rewritten as
	\[
	\widetilde S = \left( 
	\begin{array}{cc}
		\widetilde S_{\DD} & \widetilde S_{\DP}\\
		\widetilde S_{\Pi\Delta} & \widetilde S_{\PP}
	\end{array}
	\right) .
	\]
	The matrix $\widetilde S_\DD$ is  block diagonal  and invertible. Following \cite{LiWidlund2006}, we can use block Cholesky elimination to get
	\begin{equation}\label{invertStilde}
		\tSmat^{-1} = 
		\left(
		\begin{array}{cc}
			\widetilde S_{\DD}^{-1} & 0 \\
			0 & 0
		\end{array}\right) + \left(
		\begin{array}{c}
			-\widetilde S_{\DD}^{-1} \widetilde S_{\DP}\\
			I_{N_\Pi}
		\end{array}
		\right)
		\left( \widetilde  S_{\PP} - \widetilde {S}_{\Pi\Delta} \widetilde {S}^{-1}_{\DD} \widetilde {S}_{\DP}\right)^{-1} \left(
		\begin{array}{cc}
			-\widetilde  S_{\Pi\Delta} \widetilde S^{-1}_{\DD} & I_{N_\Pi}\end{array}
		\right)
	\end{equation}
	Only the matrix $\tilde S_{\PP} - \tilde{S}_{\Pi\Delta} \tilde{S}^{-1}_{\DD} \tilde{S}_{\DP}$ is assembled once for all (this is done by computing its action on the columns of $I_{N_\Pi}$). As far as the other matrices on the right hand side of the expression are concerned, only their action on a given vector needs to be implemented.

	Each block $S_{\DD}^\ell$ is the Schur complement with respect to the degrees of freedom interior to the subdomain $\Omega_\ell$ of the matrix 
	$A_0^\ell$  obtained from the local stiffness matrix  $A^\ell$ by eliminating the rows and columns corresponding to degrees of freedom attached to the cross points. In order to apply $\tilde S_{\DD}^{-1}$ to a vector $\vec r$ one can then solve a system with the diagonal matrix\ \[A_0 = \left( 
	\begin{array}{ccc}
		A^1_0 & & \\
		& \ddots & \\
		& &A^L_0
	\end{array}
	\right). \]
	More precisely, 
	\[
	\tSmat_{\DD}^{-1} = C^T
	{A_0}
	^{-1} C,
	\] 
	where $C \in \Re^{\Ndofs \times \tN}$ (where $\Ndofs$ is the total number of degrees of freedom)is a matrix with $0$ and $1$ entries, defined in such a way that $\vec z = C\vec r$ is obtained by  ``copying'' each entry of the vector $\vec r$ (which corresponds to a node $y_i$, $i \in \mathcal{Y}\setminus\mathcal{X}$ and to a subdomain $\ell$ with $y_i \in \Yl$) in the line of $\vec z$ corresponding to the same node and subdomain.

	Then the for the  FETI-DP  method, the matrix counterpart of \nref{system_FETI}  and \nref{prec_FETI},  are 
	\begin{equation}\label{system_FETI_alg}
		B  \tSmat^{-1} B^T  \lambda  =  {\vec g}, \qquad \qquad   M_{FETI} = B_D \tSmat B_D^T,
	\end{equation}
	with ${\vec g}$  proper right-hand side.

	\section{Numerical results.}\label{sec:expes}
	In this section we present numerical results  for the model elliptic problem
	\[
	- \nabla\cdot (\rho \nabla u) = f \ \text{ in } \Omega = ]0,1[^2, \qquad u = 0 \ \text{ on } \partial \Omega.
	\]
	
	We consider both a constant coefficient
	problem  with  $\rho=1$ and several problems with 
	highly varying coefficients, with jumps across the interface. The domain $\Omega$ is decomposed into  $N^2$ square subdomains as shown in Figure \ref{fig:subdomains_ini_grids}. Two types of discretizations are considered: a structured discretization 
	with hexagonal elements and a Voronoi discretization (Figure \ref{fig:subdomains_ini_grids}, \ref{fig:subdomains_ini_grids_Voro}).

	The interface problem~\eqref{system_FETI_alg} is solved by a preconditioned conjugate gradient method (PCG) with zero initial guess and relative tolerance of $10^{-6}$. The condition numbers are numerical approximations computed from the standard tridiagonal Lanczos matrix generated during the PCG iteration as the ratio between the maximum and the minimum eigenvalues, see e.g.  \cite{GolubVanLoan_1996}.
	All the numerical tests are performed in {\sc matlab R2014}b$^\copyright$.

	\subsection{The constant coefficient case}
	We start by testing the scalability and optimality of the methods on the simplest case of constant $\rho = 1$. The right hand side is chosen as $f=\sin(\pi x) \sin(\pi y)$. 
	Inside each subdomain, we consider virtual elements of order $k = 1$. 
	In Tables \ref{ex_ro1_Nfix_c} and  \ref{ex_ro1_Nfix_c_Vor} (referring resp. to hexagonal and Voronoi meshes) we report the iteration counts and condition numbers of the FETI-DP preconditioned system \nref{system_FETI_alg} 
	for increasing $N$ (number of subdomains in each coordinate direction) and $n$ (number of elements in each subdomain). 
	The same quantities for the BDDC preconditioned system \nref{system_BDDC_alg}-\nref{prec_BDDC_alg} on hexagonal meshes
	are reported in Table  \ref{ex_ro1_Nfix_c_BDDC}.  
	In all three tests the largest problem ($N=64$, $n = 70 \times 80$) could not be tested due to memory limitations. 
	The results are in agreement with the theoretical bound on the condition numbers (see Corollary~\nref{cond_bound}), moreover,
	comparing Tables  \ref{ex_ro1_Nfix_c}   and  \ref{ex_ro1_Nfix_c_BDDC}, 
	it clearly appears that the two spectra are almost identical. 
	The small differences  can be attributed to the different systems being solved: 
	the Schur complement system
	\nref{system_BDDC_alg}-\nref{prec_BDDC_alg}
	for BDDC and the Lagrange multipliers system
	\nref{system_FETI_alg}
	for FETI-DP; the different convergence histories for the two systems lead in some cases to slightly different iteration counts and Lanczos approximations of the extreme eigenvalues.

	\begin{figure}[htbp]
		\begin{center}
			\includegraphics[width=0.45\textwidth]{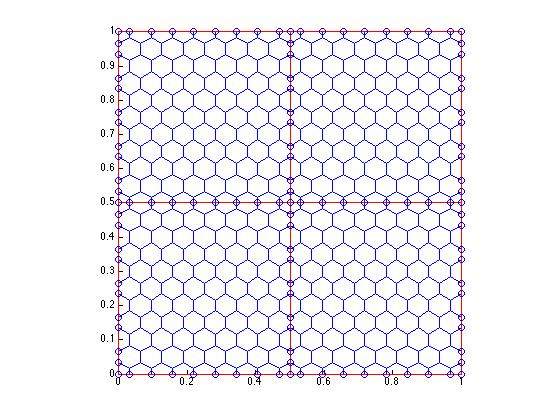}
			\includegraphics[width=0.45\textwidth]{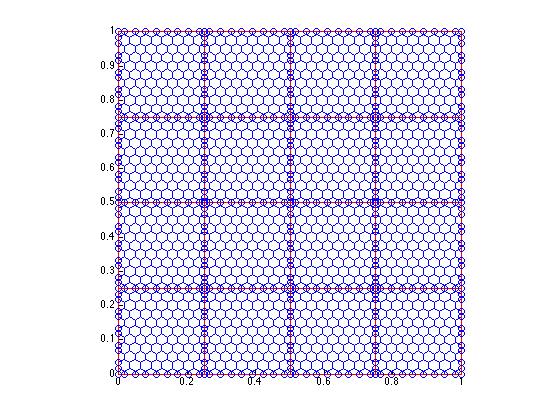}
			\caption{Initial grid of hexagons on a subdomain partition made by $2\times 2$ squares (left) and $4\times 4$. }
			\label{fig:subdomains_ini_grids}
		\end{center}
	\end{figure}
	
	\begin{figure}[htbp]
		\begin{center}
			\includegraphics[height=0.25\textwidth,width=0.25\textwidth]{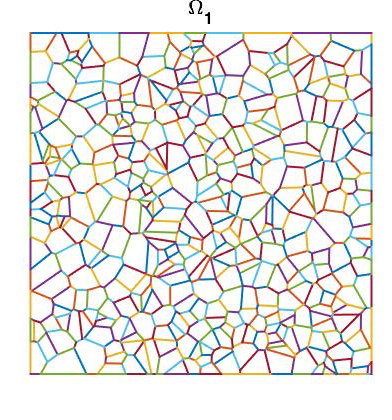}\hspace{.5cm}
			\includegraphics[width=0.55\textwidth]{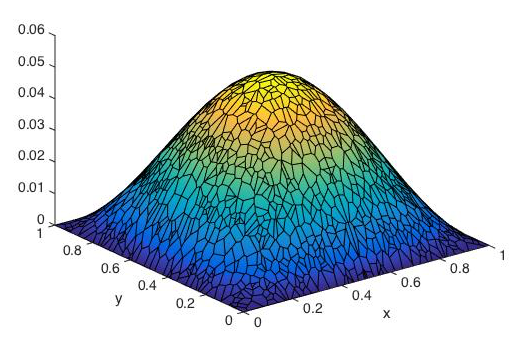}
			\caption{Voronoi mesh in each subdomain. {\em Left.} Mesh in $\Omega_1$. {\em Right} Computed solution with FETI-DP preconditioner. } 
			\label{fig:subdomains_ini_grids_Voro}
		\end{center}
	\end{figure}

	\begin{table}[htbp]
		\caption{Hexagonal mesh. Number of iterations and condition number of the FETI-DP preconditioned system preconditioners for varying number of subdomains  in each coordinate direction $N$ and local problem size $n$. } 
		\begin{center}
			\begin{tabular}{|c||c|c |  c | c | }
				\hline
				$N\backslash n$ & $8\times 10$ & $18\times 20$ & $34\times 40$ & $70\times 80$ \\
				\hline
				$8$  & 9 (3.61) & 11 (4.81) & 11 (5.86) & 12 (7.14) \\
				$16$ & 9 (3.71) & 11 (4.92) & 12 (5.99) & 14 (7.40) \\
				$32$ & 9 (3.75) & 10 (4.95) & 11 (6.02) & 13 (7.49)\\
				$64$ & 8 (3.76) &  9 (4.96) & 10 (6.03) & -- \\
				\hline
			\end{tabular}
		\end{center}
		\label{ex_ro1_Nfix_c}
	\end{table}%

	\begin{table}[htbp]
		\caption{Hexagonal mesh. Number of iterations and condition number of the BDDC preconditioned system preconditioners for varying number of subdomains in each coordinate direction $N$ and  local problem size $n$.}
		\begin{center}
			\begin{tabular}{|c||c|c |  c | c | }
				\hline
				$N\backslash n$ & $8\times 10$ & $18\times 20$ & $34\times 40$ & $70\times 80$ \\
				\hline
				$8$  & 10 (3.64) & 11 (4.81) & 11 (5.86) & 12 (7.14) \\
				$16$ & 9 (3.71) & 11 (4.92) & 12 (5.99) & 14 (7.41) \\
				$32$ & 9 (3.75) & 10 (4.95) & 12 (6.03) & 13 (7.49)\\
				$64$ & 8 (3.77) &  10 (4.97) & 10 (6.03) & -- \\
				\hline
			\end{tabular}
		\end{center}
		\label{ex_ro1_Nfix_c_BDDC}
	\end{table}%
	
	\begin{table}[htbp]
		\caption{Voronoi mesh. Number of iterations and condition number of the FETI-DP preconditioned system preconditioners for varying number of subdomains in each coordinate direction $N$ and  local problem size $n$.} 
		\begin{center}
			\begin{tabular}{|c||c|c | c | c | }
				\hline
				$N\backslash n$ & $100$ & $400$ & $1400$ & $5000$ \\
				\hline
				$8$  & 9 (2.88) & 11 (3.98) & 11 (4.90)  & 12 (5.67) \\
				$16$ & 9 (2.94) &12 (4.06)  & 12 (5.02) & 12 (5.79) \\
				$32$ & 9 (2.95) & 11 (4.07) & 11 (5.05)  & 12 (5.82)  \\
				$64$ & 9 (2.96) & 11 (4.08) & 11 (5.06)   & - \\
				\hline
			\end{tabular}
		\end{center}
		\label{ex_ro1_Nfix_c_Vor}
	\end{table}%

	\subsection{ Discontinuous coefficients $\rho$ across the skeleton.}
	In order to verify the robustness of the method with respect to strong variations of the coefficient $\rho$, we next consider problems with $\rho$ discontinuous  across the interfaces $\Gamma$. 
	Inside each subdomain, we consider virtual elements of order $k = 1$. 
	Since  the BDDC and FETI-DP spectra coincide, we only report  the results for FETI-DP. We perform two set of tests.
	In Test A, the coefficient $\rho$ is equal to 1 except in a square at the center of $\Omega$, where it assumes  a constant value $\rho_0$ (see  Figure \ref{fig:jump_ro}-Left). Tests are performed for $\rho_0 = 10^{-4},\, 10^{-2},\, 1, 10^{2}, 10^{4}$.
	In Test B, $\rho$ takes the value $\rho= 10^{\alpha}$, 
	where, for each subdomain, the value of the exponent $\alpha$ is a random integer belonging to $[-4,\dots,4]$ (see Figure \ref{fig:jump_ro}-Right).
	{In order to test the robustness of the method,  for both test cases we chose, 
		as  load term $\hat{\vec f}$ 
		a uniformly distributed random vector.}

	\begin{figure}[h!]
		\begin{center}
			\includegraphics[width=0.45\textwidth]{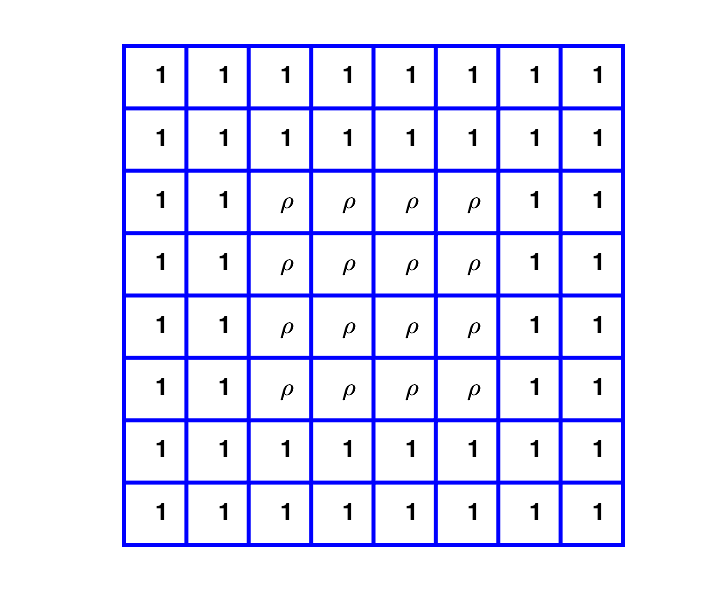} 
			\includegraphics[width=0.45\textwidth]{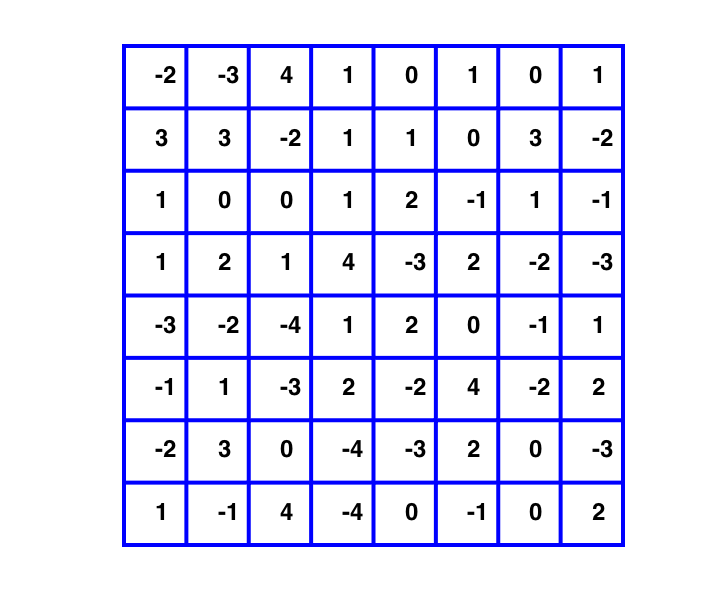} 
			\caption{{\em Left :} values of the discontinuous coefficient $\rho$ in Test A.  {\em Right}:  random distribution of the exponent $\alpha\in[-4,\dots,4]$ for the elliptic coefficient $\rho=10^\alpha$ in Test B. 
			}
			
			\label{fig:jump_ro}
		\end{center}
	\end{figure}
	
	\begin{table}[h!]
		\caption{Number of iterations and extreme eigenvalues of unpreconditioned CG and the FETI-DP preconditioned systems for two tests problems (A and B) with discontinuous coefficients $\rho_i$ shown in Figure \ref{fig:jump_ro}.}
		\begin{center}
			\begin{tabular}{|c c |c l l |c l l |}
				\hline
				Test & $\rho$ & \multicolumn{3}{l}{Unpreconditioned}    & FETI-DP & &  \\
				& & it. & $\lambda_{min}$ & $\lambda_{max}$ & it. & $\lambda_{min}$ & $\lambda_{max}$\\
				\hline
				A & $10^{-4}$  & 145  & 2.54     & 3.104e5  &  9   & 1.01 & 3.57 \\
				& $10^{-2}$ & 124  & 1.36     & 3.104e3  &  9   & 1.01 & 3.58 \\
				& $1$       &   25 & 1.34     & 32.58    &  9   & 1.02 & 3.67 \\
				& $10^{2}$  & 110  & 1.43e-2  & 27.61    &  9   & 1.02 & 3.58\\
				& $10^{4}$  & 109  & 3.83e-4  & 27.60    &  9   & 1.02 & 3.57 \\
				\hline
				B & $10^{\alpha}$ & $ > 1000$       &- & -  &  9   & 1.00 & 3.70  \\
				\hline
			\end{tabular}
		\end{center}
		\label{ex_ro_AB}
	\end{table}%
	
	In Table \ref{ex_ro_AB} we present the results for Test A and B for a fixed decomposition made of 
	$N=8$ subdomains in each coordinate direction and $n=8\times 10 $ hexagons in each subdomain $\Omega_\ell$. The results confirm that the convergence rate of FETI-DP  is quite insensitive to the coefficient jumps, with iteration counts equal to 9 and minimum eigenvalues very close to 1. In contrast, CG without FETI-DP preconditioner is  significantly affected by the  
	increasing jumps:  in the first test the condition number is of order $10^5$ for both $\alpha=10^{-4}$ and $\alpha = 10^4$, and CG does not converge within 1000 iterations in Test B.

	\begin{table}[h!]
		\caption{Hexagonal mesh. Number of iterations and condition numbers of the FETI-DP preconditioned system for varying number of subdomains in each coordinate direction $N$ and  local problem size $n$. Random distribution of the exponent $\alpha\in[-4,\dots,4]$ in the elliptic coefficient $\rho=10^\alpha$.}
		\begin{center}
			\begin{tabular}{|c||c|c |  c | c | }
				\hline
				$N\backslash n$ & $8\times 10$ & $18\times 20$ & $34\times 40$ & $70\times 80$ \\
				\hline
				$8$  & 10 (3.27) & 11 (4.16) & 13 (5.14) &  15 (6.58) \\
				$16$ & 11 (3.28) & 13 (4.21) & 15 (5.28) & 16 (6.79) \\
				$32$ & 11 (3.34) & 13 (4.36) & 15 (5.25) & 16 (6.73) \\
				\hline
			\end{tabular}
		\end{center}
		\label{ex_ro_randexp_Nfix_c}
	\end{table}%

	\begin{table}[h!]
		\caption{Voronoi mesh. Number of iterations and condition numbers of the FETI-DP preconditioned system for varying number of subdomains in each coordinate direction $N$ and  local problem size $n$. Random distribution of the exponent $\alpha\in[-4,\dots,4]$ in the elliptic coefficient $\rho=10^\alpha$.}
		\begin{center}
			\begin{tabular}{|c||c|c |  c | c | }
				\hline
				$N\backslash n$ & $100$ & $400$ & $1400$ & $5000$ \\
				\hline
				$8$  &   9 (3.07) & 11 (4.10) & 12 (4.96) & 12 (5.73) \\
				$16$ & 10 (3.08) & 12 (4.19) & 13 (4.89) & 15 (6.00) \\
				$32$ & 10 (3.00) & 12 (4.24) & 13 (5.01) & 15 (5.85) \\
				\hline
			\end{tabular}
		\end{center}
		\label{ex_ro_randexp_vor_Nfix_c}
	\end{table}%

	In Table \ref{ex_ro_randexp_Nfix_c} and \ref{ex_ro_randexp_vor_Nfix_c} we report  the iteration counts and condition numbers of the FETI-DP preconditioned system \eqref{system_FETI_alg} for  increasing $N$ (number of subdomains in each coordinate direction) and $n$ (number  of elements per subdomain) for Test B.
	Tables \ref{ex_ro_randexp_Nfix_c} and  \ref{ex_ro_randexp_vor_Nfix_c} refer to the case of hexagonal and Voronoi meshes, respectively. The results are consistent with the theoretical bound on the condition numbers (see Corollary~\nref{cond_bound}), i.e. we have a polylogarithmic dependence of the convergence rate on $H/h$ not affected by the jump in the coefficients $\rho$.

	\subsection{High-order elements.}
	Finally, we present some computations performed with high-order elements. 
	We take $\rho = 1$ and $f = \sin(\pi x) \sin(\pi y)$.
	Table  \ref{ex_ro1_Nfix_c_p2}  shows the number of iterations and the
	maximum eigenvalue $\lambda_{\max}$ for the FETI-DP preconditioned system, for fixed local mesh size ($n = 8 \times 10$ hexagons) 
	but varying both the polynomial degree $k$ from 2 to 8, and the number of subdomains in each coordinate direction $N$ from $8$ to $32$.
	The minimum eigenvalue is not reported since it is always very close to 1.  
	The BDDC results are analogous and therefore not displayed.
	
	In Figure \ref{p_cond}-Left, it is possible to verify the polylogarithmic dependence of the condition number on the polynomial order $k$,
	as predicted by the theoretical bound \nref{cond_bound}.
	Indeed, for $H\slash h$ fixed, 
	the expected bound $(1+\log(k^2 H\slash h)^2 \sim (1 +log(k^2))^2 \sim(1 + 2\log(k))^2 $
	is a linear semilog plot of the square root of $\lambda_{\max}$.
	In Figure \ref{p_cond}-Right, we keep the polynomial degree fixed to $k=4$ and $k=8$ and increase $H\slash h$, that is  
	the number of interior elements in each subdomain is increased from $8\times 10$ to $18\times 20$. 
	The largest eigenvalue and thus the condition number behaves as expected and the $(1+log(k^2 H\slash h))^2 \sim(1+log( H\slash h))^2$ bound (for fixed $k$) is confirmed.

	\begin{table}[htbp]
		\caption{
			Number of iterations and 
			maximum eigenvalues of the FETI-DP preconditioned system for fixed  local problem size ($n=8\times10$ hexagons) but increasing  polynomial order $k$ and subdomain partitions in each coordinate direction $N$.}
		\label{ex_ro1_Nfix_c_p2}
		\begin{center}
			\begin{tabular}{|c||c|c |  c | c | c | c | c | }
				\hline
				$N\backslash k$ & 2 & 3 & 4 & 5 & 6 & 7 & 8 \\
				\hline
				$8$  & 11 (5.72) & 12 (7.51) & 13 (8.56) & 13 (9.62)   & 14 (10.74) & 14 (11.50) & 14 (12.19) \\
				$16$ & 12 (5.85) & 14 (7.67) & 14 (8.72) & 15 (9.90)   & 16 (10.86) & 17 (11.74) & 17 (12.44)\\
				$32$ & 11 (5.88) & 13 (7.70) & 14 (8.90) & 14 (10.01) & 14 (10.96) & 16 (11.81) &  -  \\
				\hline
			\end{tabular}
		\end{center}
	\end{table}%

	\begin{figure}[htbp]
		\begin{center}
			\includegraphics[width=0.45\textwidth]{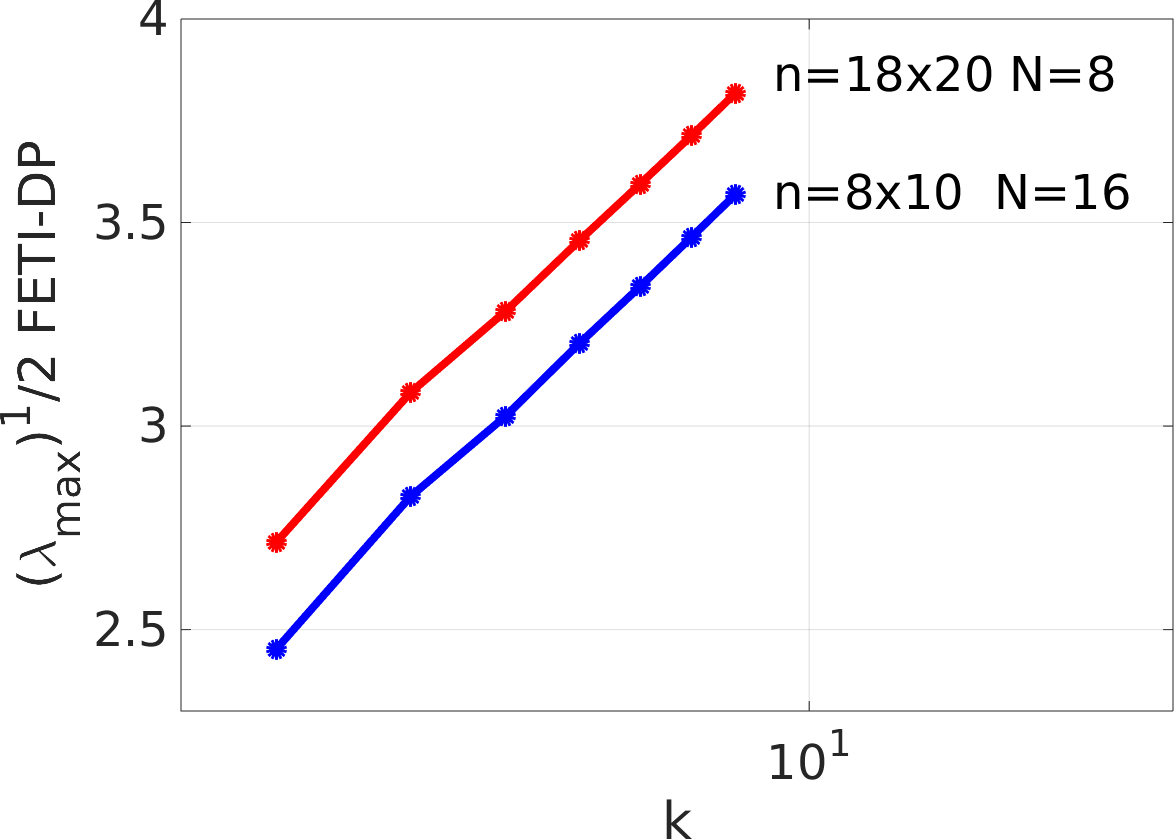}  \quad
			\includegraphics[width=0.45\textwidth]{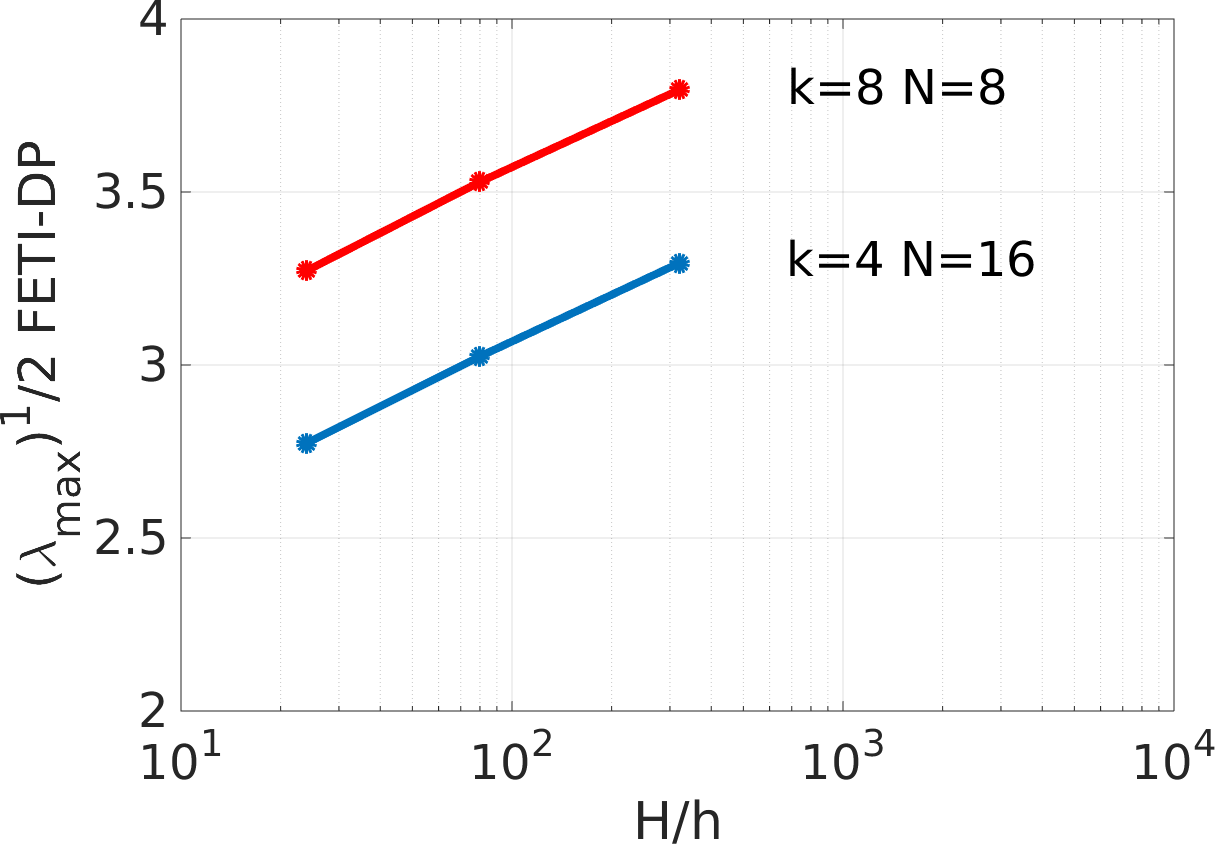}    
			\caption{{\em Left.} Square root of the largest eigenvalue $\sqrt{\lambda_{\max}}$ for increasing polynomial degree, 
				$n=18\times20$, $N=8$ and $n=8\times10$, $N=16$. 
				{\em Right.} Square root of the largest eigenvalue $\sqrt{\lambda_{\max}}$ for increasing  $H\slash h$ and $k=4,8$, $N=16,8$.}
			\label{p_cond}
		\end{center}
	\end{figure}
	
	



\end{document}